\newtheorem{theorem}{Theorem}[section]
\newtheorem{lemma}[theorem]{Lemma}
\newtheorem{proposition}[theorem]{Proposition}
\newtheorem{corollary}[theorem]{Corollary}
\theoremstyle{definition}
\newtheorem{definition}[theorem]{Definition}
\newtheorem{example}[theorem]{Example}
\theoremstyle{remark}
\numberwithin{equation}{section}
\begin{document}
\title{Non-linear positive maps between $C^*$-algebras}
\author[A. Dadkhah, M. S. Moslehian]{Ali Dadkhah and Mohammad Sal Moslehian}
\address{Department of Pure Mathematics, Center Of Excellence in Analysis on Algebraic Structures (CEAAS), Ferdowsi University of Mashhad, P. O. Box 1159, Mashhad 91775, Iran}

\email{dadkhah61@yahoo.com}
\email{moslehian@um.ac.ir}

\subjclass[2010]{15A60, 47A63.}
\keywords{$C^*$-algebra; $n$-positive map; Lieb map; superadditive; nonlinear map.}
\begin{abstract}
We present some properties of (not necessarily linear) positive maps between $C^*$-algebras. We first extend the notion of Lieb functions to that of Lieb positive maps between $C^*$-algebras. Then we give some basic properties and fundamental inequalities related to such maps. Next, we study $n$-positive maps ($n\geq 2$). We show that if for a unital $3$-positive map $\Phi: \mathscr{A}\longrightarrow\mathscr{B}$ between unital $C^*$-algebras and some $ A\in \mathscr{A}$ equality $\Phi(A^*A)= \Phi(A)^* \Phi(A)$ holds, then $\Phi(XA)=\Phi (X)\Phi (A)$ for all $X \in \mathscr{A}$.

In addition, we prove that for a certain class of unital positive maps $\Phi: \mathscr{A}\longrightarrow\mathscr{B}$ between unital $C^*$-algebras, the inequality $\Phi(\alpha A)\leq\alpha \Phi(A)$ holds for all $ \alpha \in [0,1]$ and all positive elements $ A\in \mathscr{A}$ if and only if $\Phi(0)=0$. Furthermore, we show that if for some $\alpha$ in the unit ball of $\mathbb{C}$ or in $\mathbb{R}_+$ with $|\alpha|\neq 0,1$, the equality $\Phi(\alpha I)=\alpha I$ holds, then $\Phi$ is additive on positive elements of $\mathscr{A}$. Moreover, we present a mild condition for a $6$-positive map, which ensures its linearity.
\end{abstract} \maketitle
\section{Introduction and preliminaries}

Let $\mathbb{B}(\mathscr{H})$ denote the $C^*$-algebra of all bounded linear operators on a complex Hilbert space $(\mathscr{H}, \langle\cdot,\cdot\rangle)$ with the unit $I$. If $\mathscr{H}=\mathbb{C}^n$, we identify $\mathbb{B}(\mathbb{C}^n)$ with the matrix algebra of $n\times n$ complex matrices $M_n(\mathbb{C})$. We consider the usual L\"{o}wner order $\leq$ on the real space of self-adjoint operators. An operator $A$ is said to be strictly positive (denoted by $ A >0$) if it is a positive invertible operator. According to the Gelfand--Naimark--Segal theorem, every $C^*$-algebra can be regarded as a $C^*$-subalgebra of $\mathbb{B}(\mathscr{H})$ for some Hilbert space $\mathscr{H}$. We use $\mathscr{A},\mathscr{B}, \cdots $ to denote $C^*$-algebras and $\mathscr{A}_+$ and $\mathscr{A}_{++}$ to denote the sets of positive and positive invertible elements of $\mathscr{A}$, respectively.
A map $\Phi: \mathscr{A}\to \mathscr{B}$ between $C^*$-algebras is said to be $*$-map or self-adjoint if $\Phi(A^*)=\Phi(A)^*$. It is positive if $\Phi(A)\geq 0$ whenever $A \geq 0$. It is called strictly positive if $A>0$ implies that $\Phi(A)>0$.
We say that $\Phi$ is unital if $\mathscr{A}, \mathscr{B}$ are unital and $\Phi$ preserves the unit. In this case, we simply denote both units of $\mathscr{A}$ and $\mathscr{B}$ by $I$. A map $\Phi$ is called $n$-positive if the map $\Phi_n: M_n(\mathscr{A})\to M_n(\mathscr{B})$ defined by $\Phi_n([a_{ij}]) = [\Phi(a_{ij})]$ is positive, where $M_n(\mathscr{A})$ stands for the $C^*$-algebra of $n \times n$ matrices with entries in $\mathscr{A}$. A map $\Phi$ is said to be completely positive if it is $n$-positive for all $n\in \mathbb{N}$; see \cite{AND, PAU, HAN}.
\\
It is known that for $A>0, B\geq 0$ as two elements in $ M_n(\mathscr{A})$,
\begin{eqnarray}\label{msm1}
\begin{bmatrix}A& X\\ X^*& B \end{bmatrix} \geq 0 \Longleftrightarrow B\geq X^* A^{-1} X
\end{eqnarray}
(see \cite[Lemma 2.1]{choi4}). Moreover, if $\mathscr{A}$ is a von Neumann algebra (that is a unital $*$-subalgebra of $\mathbb{B}(\mathscr{H})$ being closed in the strong operator topology), then the matrix $\begin{bmatrix}A& X\\ X^*& B \end{bmatrix}$ is positive if and only if there exists $K\in \mathscr{A}$ with $\|K\|\leq 1$ (called a contraction) such that $X= A^{\frac{1}{2}}K B^\frac{1}{2}$; see \cite[Proposition 2.5]{balkan}. \\ A map $\Phi: \mathscr{X}\subseteq \mathscr{A}\to \mathscr{B}$ is called superadditive on a subset $\mathscr{X}$ of $\mathscr{A}$, which is closed under addition, if
\begin{eqnarray*}
\Phi(A+B)\geq \Phi(A)+\Phi(B)
\end{eqnarray*}
for every $A,B \in \mathscr{X}$ and it is called strongly superadditive, if
\begin{eqnarray}\label{ssupa}
\Phi(A+B+C)+\Phi(A)\geq \Phi(A+B)+\Phi(A+C)
\end{eqnarray}
for every $A,B,C \in \mathscr{X}$. A map $\Phi: \mathscr{X}\subseteq \mathscr{A}\to \mathscr{B}$ is called starshaped if $\Phi(\alpha A)\leq \alpha \Phi(A)$ for every $A\in \mathscr{X}$ and $\alpha \in [0,1]$. It is known that every starshaped function $f: [0,\infty) \to [0,\infty)$ is superadditive (see \cite[Theorem 5]{bruk2}). However the converse of this statement is not true, in general (see \cite[ page 422]{beck}).

For example, the usual determinant and the permanent are completely positive functions on $M_n(\mathbb{C})$ as well as strongly superadditive functions over the cone of positive semidefinite matrices in $M_n(\mathbb{C})$ (see \cite{choi} and \cite{li}).

Recently, Paksoy et al. \cite{paksoy} gave an extension of inequality (\ref{ssupa}) for
generalized matrix functions of positive semidefinite matrices associated with any subgroup of the
permutation group and Lin and Sra \cite{lin} generalized this inequality for self-adjoint matrices.

Ando and Choi \cite{choi} characterized completely positive non-linear maps between $C^*$-algebras. They showed that such maps can be represented as the sum of completely positive maps which are mixed homogeneous. Moreover, they showed that \cite[Theorem 3]{choi} if $\Phi:\mathscr{A}\to \mathscr{B}$ is a completely positive map between $C^*$-algebras, then it is strongly superadditive on positive elements. In addition, they showed that a non-linear completely positive map $\Phi:\mathscr{A}\to \mathscr{B}$
 is additive if and only if $\Phi(2A)=2\Phi(A)$ for each $A\in \mathscr{A}$.
Choi \cite[Theorem 3.1 and Corollary 2.8]{choi3} showed that if $\Phi:\mathscr{A}\to \mathscr{B}$ is a unital $2$-positive
linear map between unital $C^*$-algebras, then $\Phi(A^*A)\geq \Phi(A^*)\Phi(A)$ for every
 $A \in \mathscr{A}$ and if for some $A$, equality $\Phi(A^*A)=\Phi(A^*) \Phi(A)$ holds, then
\begin{eqnarray*}
\Phi(XA)=\Phi(X)\Phi(A)
\end{eqnarray*}
for every $X \in \mathscr{A}$.

G\"{u}nther and Klotz \cite{gunt} investigated $n$-positive norms on $M_k(\mathbb{C})$ as
$n$-positive maps. They show that there is not any $4$-positive norm on
$M_k(\mathbb{C})$.

The aim of this paper is to extend some properties of linear positive maps for general positive maps. In the next section, we give the notion of Lieb map as an extension of Lieb function. We then prove some basic properties of such maps and give a relationship between Lieb maps and $2$-positive maps.

Section 3 is devoted to the study of 3-positive maps. We show
that for every unital $3$-positive map $\Phi:\mathscr{A}\to \mathscr{B}$ between $C^*$-algebras the validity of $\Phi(A^*A)= \Phi(A)^*\Phi(A)$ for some $A\in \mathscr{A}$ implies that $\Phi(XA) =
\Phi(X)\Phi(A)$ for all $X\in \mathscr{A}$. Furthermore, we show that
this result does not hold for non-linear $2$-positive maps, in general. This gives an extension of a result due to Choi for $2$-positive linear maps. We then prove that every $3$-positive map $\Phi:\mathscr{A}\to \mathscr{B}$ is superadditive on positive elements whenever $\Phi(0) = 0$, and yield some new results on the $n$-positivity of any $C^*$-norm.

Section 4 begins with a study of a class of positive maps by establishing that for a certain class of positive maps $\Phi:\mathscr{A}\to \mathscr{B}$, the inequality $\Phi(\alpha A)\leq\alpha \Phi(A)$ holds for all $ \alpha \in [0,1]$ and all positive elements $ A\in \mathscr{A}$ if and only if $\Phi(0)=0$. Furthermore, we show that if for some $\alpha$ in the unit ball of $\mathbb{C}$ or in $\mathbb{R}_+$ with $|\alpha|\neq 0,1$, the equality $\Phi(\alpha I)=\alpha I$ holds, then $\Phi$ is additive on positive elements of $\mathscr{A}$. Finally, we prove that for
some classes of positive maps $\Phi:\mathscr{A}\to \mathscr{B}$, it is linear if and only if $\Phi\big((r+\mbox{i}s) I\big)= (r+\mbox{i}s) I$ for some $r,s \in \mathbb{R}$ with $0 \neq|r+\mbox{i}s|<1 $ and $s\neq 0$.

\section{Some basic properties of Lieb maps }

In this section, we extend the definition of a Lieb function \cite{lieb} by introducing the concept of Lieb map between $C^*$-algebras. Then we study some significant properties of such maps. \\
Recall that a map $\varphi: \mathscr{A}\to \mathbb{C}$ is called a Lieb function if it has the following properties:
\begin{itemize}
\item(Monotonicity) $\varphi(A) \geq \varphi (B)\geq 0 $ if $A\geq B \geq 0$,
\item (Cauchy--Schwarz) $\varphi(A^*A)\varphi(B^*B)\geq |\varphi(A^*B)|^2 $ for every $A,B \in \mathscr{A}$.
\end{itemize}
We generalize this definition as follows:
\begin{definition}\label{lieb}
Let $\mathscr{A}, \mathscr{B}$ be two $C^*$-algebras. A map $\Phi: \mathscr{A}\to \mathscr{B}$ is called a Lieb map if it has the following properties:
\begin{itemize}
\item(Monotonicity) $\Phi(A) \geq \Phi(B)$ if $A\geq B \in \mathscr{A}_{+}$,
\item (Cauchy--Schwarz) $\begin{bmatrix}
\Phi(A^*A) & \Phi(A^*B) \\ \Phi(A^*B)^* & \Phi(B^*B)
\end{bmatrix}\geq 0 $ for every $A,B \in \mathscr{A}$.
\end{itemize}
\end{definition}
The monotonicity ensures that all Lieb maps are positive (see the proof of Proposition \ref{matrixeq}). Note that a Lieb map is not continuous, in general. There are several significant examples of Lieb maps.
\begin{example}\noindent
\begin{enumerate}
\item ٍEvery Lieb function $\varphi:\mathscr{A}\to \mathbb{C}$.
\item Every 2-positive map $\Phi: \mathscr{A}\to \mathscr{B}$; see Corollary \ref{cor1} (1).
\item The norm of any $C^*$-algebra is $2$-positive and so is a Lieb map; see Corollary \ref{norm2}.
\item The determinant and the usual trace on $M_n(\mathbb{C})$ are completely positive and so are Lieb maps.
\end{enumerate}
\end{example}
We aim to give an equivalent definition of a Lieb map via positive block matrices. The following lemma is an easy consequence of \cite[Theorem 1.3.3]{bhatia}.

\begin{lemma}\label{epsilpo}
Let $A,B \geq 0$. Then matrix $\begin{bmatrix}
A & X \\ X^* & B
\end{bmatrix}\geq 0 $ if and only if $B\geq X^* (A+\varepsilon I)^{-1} X $ for every $\varepsilon >0$.
\end{lemma}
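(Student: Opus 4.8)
The plan is to bootstrap from the already-recorded equivalence (\ref{msm1}), which handles a strictly positive $(1,1)$-entry, to a general positive $A$ by an $\varepsilon$-regularization, and then pass to a norm limit. Note first that for any $\varepsilon>0$ the element $A+\varepsilon I$ is strictly positive: it is a sum of two positive elements, hence positive and self-adjoint, and since $\sigma(A)\subseteq[0,\|A\|]$ we have $\sigma(A+\varepsilon I)\subseteq[\varepsilon,\|A\|+\varepsilon]$, so $A+\varepsilon I$ is invertible. Thus (\ref{msm1}) is applicable with $A$ replaced by $A+\varepsilon I$.

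For the forward implication, assume $\begin{bmatrix}A& X\\ X^*& B\end{bmatrix}\geq 0$. Adding the positive block matrix $\begin{bmatrix}\varepsilon I& 0\\ 0& 0\end{bmatrix}$ gives $\begin{bmatrix}A+\varepsilon I& X\\ X^*& B\end{bmatrix}\geq 0$, and (\ref{msm1}) then yields $B\geq X^*(A+\varepsilon I)^{-1}X$; as $\varepsilon>0$ was arbitrary, this direction is complete. For the converse, suppose $B\geq X^*(A+\varepsilon I)^{-1}X$ for every $\varepsilon>0$. Fixing $\varepsilon>0$ and applying (\ref{msm1}) to the strictly positive element $A+\varepsilon I$, we obtain $\begin{bmatrix}A+\varepsilon I& X\\ X^*& B\end{bmatrix}\geq 0$. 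Since $\begin{bmatrix}A+\varepsilon I& X\\ X^*& B\end{bmatrix}\to\begin{bmatrix}A& X\\ X^*& B\end{bmatrix}$ in norm as $\varepsilon\to 0^+$ and the cone of positive elements of a $C^*$-algebra is norm-closed, the limit $\begin{bmatrix}A& X\\ X^*& B\end{bmatrix}$ is positive.

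I do not expect a genuine obstacle here; the only points needing (routine) care are confirming that $A+\varepsilon I$ is strictly positive so that (\ref{msm1}) legitimately applies, and invoking norm-closedness of the positive cone to justify the limiting step. Alternatively, and more in the spirit of the attribution, both halves can be read off directly from \cite[Theorem 1.3.3]{bhatia}, which records precisely this $\varepsilon$-regularised Schur-complement criterion.
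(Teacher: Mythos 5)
Your proof is correct. The paper itself offers no argument for this lemma --- it is stated as ``an easy consequence of \cite[Theorem 1.3.3]{bhatia}'' with the proof omitted --- and your $\varepsilon$-regularisation via (\ref{msm1}) together with norm-closedness of the positive cone is exactly the standard argument the authors are alluding to, with all the routine points (strict positivity of $A+\varepsilon I$, the limiting step) handled properly.
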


Our first result reads as follows.

\begin{proposition}\label{matrixeq}
Let $\mathscr{A}$ be a von Neumann algebra and $\mathscr{B}$ be a unital $C^*$-algebra. Then a map $\Phi: \mathscr{A}\to \mathscr{B}$ is Lieb if and only if
\begin{eqnarray}\label{postmat}
\begin{pmatrix} \Phi(A) &\Phi(C) \\ \Phi(C)^* & \Phi(B) \end{pmatrix}\geq 0
\end{eqnarray}
for every $A,B, C \in \mathscr{A}$ such that $\begin{pmatrix} A &C \\ C^* & B \end{pmatrix}\geq 0$.
\end{proposition}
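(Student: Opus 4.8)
The plan is to establish the two implications separately, in each direction transporting the positivity of a $2\times2$ block matrix through $\Phi$ by means of the two Lieb axioms together with the structure theory of positive block matrices.

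For the implication ``Lieb $\Rightarrow$ (\ref{postmat})'' I would argue as follows. Fix $A,B,C\in\mathscr{A}$ with $\begin{pmatrix}A&C\\C^*&B\end{pmatrix}\geq0$; compressing to the two diagonal corners gives $A,B\in\mathscr{A}_+$. Since $\mathscr{A}$ is a von Neumann algebra, the factorization recalled after (\ref{msm1}) (namely \cite[Proposition 2.5]{balkan}) provides a contraction $K\in\mathscr{A}$ with $C=A^{1/2}KB^{1/2}$. Set $X:=A^{1/2}$ and $Y:=KB^{1/2}$, both in $\mathscr{A}$, so that $X^*X=A$, $X^*Y=C$, and $Y^*Y=B^{1/2}K^*KB^{1/2}\leq B$ because $K^*K\leq I$. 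Applying the Cauchy--Schwarz axiom to the pair $X,Y$ yields $\begin{pmatrix}\Phi(A)&\Phi(C)\\\Phi(C)^*&\Phi(Y^*Y)\end{pmatrix}\geq0$, while monotonicity gives $\Phi(Y^*Y)\leq\Phi(B)$; adding the positive matrix $\begin{pmatrix}0&0\\0&\Phi(B)-\Phi(Y^*Y)\end{pmatrix}$ then produces (\ref{postmat}).

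For the converse, assume (\ref{postmat}) whenever the source matrix is positive. The Cauchy--Schwarz property is then immediate: for $A,B\in\mathscr{A}$ one has $\begin{pmatrix}A^*A&A^*B\\B^*A&B^*B\end{pmatrix}=\begin{pmatrix}A^*\\B^*\end{pmatrix}\begin{pmatrix}A&B\end{pmatrix}\geq0$, so (\ref{postmat}) applied with $C=A^*B$ is exactly the required inequality. For monotonicity, let $A\geq B\in\mathscr{A}_+$; I would first observe $\begin{pmatrix}A&B\\B&B\end{pmatrix}=\begin{pmatrix}A-B&0\\0&0\end{pmatrix}+\begin{pmatrix}B^{1/2}\\B^{1/2}\end{pmatrix}\begin{pmatrix}B^{1/2}&B^{1/2}\end{pmatrix}\geq0$, hence $\begin{pmatrix}\Phi(A)&\Phi(B)\\\Phi(B)^*&\Phi(B)\end{pmatrix}\geq0$. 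Specializing to $A=B$ shows $\Phi(B)\geq0$ — hence $\Phi(B)=\Phi(B)^*$ — for every $B\in\mathscr{A}_+$, which is the promised fact that the monotonicity axiom forces positivity. Finally, compressing the last matrix against the column $\begin{pmatrix}I\\-I\end{pmatrix}$ gives $\Phi(A)-\Phi(B)-\Phi(B)^*+\Phi(B)\geq0$, that is, $\Phi(A)\geq\Phi(B)$.

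The one genuine obstacle is the forward implication, and more precisely the step in which an arbitrary positive block matrix over $\mathscr{A}$ is written with off-diagonal corner $A^{1/2}KB^{1/2}$ for a contraction $K\in\mathscr{A}$: this is exactly where the von Neumann algebra hypothesis on $\mathscr{A}$ enters, since in a general $C^*$-algebra one only has the $\varepsilon$-approximate statement of Lemma \ref{epsilpo} and no such $K$ need exist. Everything else is a short bookkeeping argument with $2\times2$ matrices over $\mathscr{B}$; in particular, the converse implication uses nothing about $\mathscr{A}$ beyond its being a $C^*$-algebra.
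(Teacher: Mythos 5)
Your proof is correct, and the forward direction together with the Cauchy--Schwarz half of the converse is exactly the paper's argument: factor $C=A^{1/2}KB^{1/2}$ via the von Neumann algebra hypothesis, apply the Cauchy--Schwarz axiom to $A^{1/2}$ and $KB^{1/2}$, and absorb $\Phi(B)-\Phi(B^{1/2}K^*KB^{1/2})\geq 0$ into the lower corner. Where you genuinely diverge is the monotonicity half of the converse: the paper proves the full two-way equivalence $X\geq Y\Longleftrightarrow\begin{pmatrix}X&Y\\Y&Y\end{pmatrix}\geq 0$ (their \eqref{positivee}) by adding $\varepsilon I$ and taking Schur complements via \eqref{msm1}, then invokes it twice, whereas you prove only the direction you need by the explicit decomposition $\begin{pmatrix}A&B\\B&B\end{pmatrix}=\begin{pmatrix}A-B&0\\0&0\end{pmatrix}+\begin{pmatrix}B^{1/2}\\B^{1/2}\end{pmatrix}\begin{pmatrix}B^{1/2}&B^{1/2}\end{pmatrix}$ and extract the conclusion by compressing with $\begin{pmatrix}I\\-I\end{pmatrix}$. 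Your route is more elementary (no inverses, no $\varepsilon$-limit) and you correctly handle the one subtlety it creates: the compression a priori yields $\Phi(A)-\Phi(B)^*\geq 0$, which is why you first note that $\Phi(B)\geq 0$, hence $\Phi(B)=\Phi(B)^*$, for positive $B$ --- the same self-adjointness issue the paper sidesteps by phrasing everything through \eqref{positivee}.
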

\begin{proof}
($\Longrightarrow$) Let $\begin{pmatrix} A &C \\ C^* & B \end{pmatrix}\geq 0$. Therefore, there exists a contraction $K$ such that $C=A^{\frac{1}{2}} K B^{\frac{1}{2}}$. Since $K$ is a contraction, we get $K^*K\leq I$ and therefore $B^{\frac{1}{2}}K^*K B^{\frac{1}{2}}\leq B$. The monotonicity of $\Phi$ therefore implies $\Phi(B^{\frac{1}{2}}K^*K B^{\frac{1}{2}})\leq \Phi(B)$, and hence
\begin{eqnarray*}
\begin{pmatrix} \Phi(A) &\Phi(C) \\ \Phi(C)^* & \Phi(B) \end{pmatrix}\geq \begin{pmatrix} \Phi(A) &\Phi(A^{\frac{1}{2}}K B^{\frac{1}{2}}) \\ \Phi(A^{\frac{1}{2}} K B^{\frac{1}{2}})^* & \Phi(B^{\frac{1}{2}}K^*K B^{\frac{1}{2}}) \end{pmatrix},
\end{eqnarray*}
and the last matrix is positive by the Cauchy--Schwarz part of the definition of a Lieb map.

($\Longleftarrow$) First note that $\Phi$ is a positive map. To see this, take $B=C=0$ in (\ref{postmat}). We first observe that for positive elements $X,Y$ in a $C^*$-algebra, we have
\begin{eqnarray} \label{positivee}
X \geq Y {\rm\ if\ and\ only\ if\ } \begin{pmatrix} X&Y \\ Y & Y \end{pmatrix}\geq 0.
\end{eqnarray}
Indeed, for arbitrary $\varepsilon>0$ we can write
\begin{eqnarray*}
 \begin{pmatrix} X&Y \\ Y & Y \end{pmatrix} + \begin{pmatrix} \varepsilon I &\varepsilon I \\ \varepsilon I & \varepsilon I \end{pmatrix} \geq 0 &\Longleftrightarrow& \begin{pmatrix} X+\varepsilon I &Y+\varepsilon I \\ Y+\varepsilon I & Y+\varepsilon I \end{pmatrix} \geq 0\\ & \Longleftrightarrow& X+\varepsilon I \geq (Y+\varepsilon I) (Y+\varepsilon I)^{-1} (Y+\varepsilon I) \\ & & \qquad \qquad \qquad \text{(by \eqref{msm1})}\\ &\Longleftrightarrow& X+\varepsilon I \geq Y+\varepsilon I \\ & \Longleftrightarrow& X \geq Y.
\end{eqnarray*}
Hence, by letting $\varepsilon \longrightarrow 0$, we get $X\geq Y \Longleftrightarrow \begin{pmatrix} X&Y \\ Y & Y \end{pmatrix}\geq 0$.
 \\
 Let $A\geq B \geq 0$. Utilizing (\ref{positivee}), we get $\begin{pmatrix} A &B \\ B & B \end{pmatrix}\geq 0$. Applying the assumption, we get $\begin{pmatrix} \Phi(A) & \Phi(B) \\ \Phi(B) & \Phi(B) \end{pmatrix}\geq 0$. Using (\ref{positivee}) again, we conclude that $ \Phi(B) \leq \Phi(A)$, which proves the monotonicity condition for $\Phi$.
 The Cauchy--Schwarz inequality immediately follows from the positivity of matrix $
\begin{pmatrix} A^*A & A^*B \\ B^*A & B^*B \end{pmatrix}
$.
\end{proof}
\begin{corollary}\label{cor1}\noindent
\begin{enumerate}
\item If $\Phi$ is a $2$-positive map between unital $C^*$-algebras, then $\Phi$ is a Lieb map.
\item Let $\mathscr{A}$ be a von Neumann algebra and $\mathscr{B}$ be a unital $C^*$-algebra. Let $\Phi: \mathscr{A}\to \mathscr{B}$ be a $*$-map. Then $\Phi$ is Lieb map if and only if $\Phi$ is $2$-positive.
\end{enumerate}
\end{corollary}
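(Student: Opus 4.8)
The plan is to treat the two parts separately, in each case reducing everything to the block-matrix machinery already established. For part (1), suppose $\Phi$ is $2$-positive; I must check the two conditions of Definition \ref{lieb}. For the Cauchy--Schwarz condition I would start from the factorization
\begin{eqnarray*}
\begin{bmatrix} A^*A & A^*B \\ B^*A & B^*B \end{bmatrix} = \begin{bmatrix} A & B \\ 0 & 0 \end{bmatrix}^{*}\begin{bmatrix} A & B \\ 0 & 0 \end{bmatrix} \geq 0
\end{eqnarray*}
in $M_2(\mathscr{A})$, apply $\Phi_2$ to obtain $\begin{bmatrix} \Phi(A^*A) & \Phi(A^*B) \\ \Phi(B^*A) & \Phi(B^*B) \end{bmatrix} \geq 0$, and note that a positive element of $M_2(\mathscr{B})$ is self-adjoint, whence $\Phi(B^*A) = \Phi(A^*B)^{*}$; the matrix then has exactly the form required in Definition \ref{lieb}. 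For monotonicity, given $A \geq B \geq 0$ in $\mathscr{A}$, I would use the equivalence \eqref{positivee} to get $\begin{bmatrix} A & B \\ B & B \end{bmatrix} \geq 0$, apply $\Phi_2$, and (using that a $2$-positive map is positive, so $\Phi(A),\Phi(B)\geq 0$) apply \eqref{positivee} in the reverse direction to the resulting positive matrix $\begin{bmatrix} \Phi(A) & \Phi(B) \\ \Phi(B) & \Phi(B) \end{bmatrix}$ to conclude $\Phi(A) \geq \Phi(B)$.

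For part (2), the implication ``$2$-positive $\Rightarrow$ Lieb'' is immediate from part (1), since a von Neumann algebra is unital. For the converse, assume $\Phi$ is a Lieb $*$-map and let $\begin{bmatrix} A & C \\ C^* & B \end{bmatrix} \in M_2(\mathscr{A})$ be positive. Proposition \ref{matrixeq} gives $\begin{bmatrix} \Phi(A) & \Phi(C) \\ \Phi(C)^{*} & \Phi(B) \end{bmatrix} \geq 0$, and since $\Phi$ is a $*$-map we may replace $\Phi(C)^{*}$ by $\Phi(C^*)$; this says precisely that $\Phi_2$ sends the given positive block matrix to a positive element, i.e.\ $\Phi$ is $2$-positive.

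Both arguments are short because Proposition \ref{matrixeq} and the equivalence \eqref{positivee} carry the weight. The only place requiring a little care --- and the sole use of the $*$-map hypothesis in part (2) --- is the mismatch between the lower-left entry $\Phi(C^*)$ appearing in the definition of $2$-positivity and the entry $\Phi(C)^{*}$ appearing both in Definition \ref{lieb} and in Proposition \ref{matrixeq}: in part (1) this reconciles itself from the self-adjointness of the image matrix, while in part (2) the $*$-map assumption is exactly what closes the gap. I do not anticipate any genuine obstacle beyond this bookkeeping.
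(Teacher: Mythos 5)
Your proof is correct and follows essentially the same block-matrix route as the paper, resting on Proposition \ref{matrixeq} and the equivalence \eqref{positivee}. The only (harmless) divergence is in part (1): the paper first shows that every $2$-positive map is a $*$-map by applying $\Phi_2$ to the positive matrix $\begin{pmatrix} |A| & A^* \\ A & |A^*| \end{pmatrix}$ and then invokes the ($\Longleftarrow$) direction of Proposition \ref{matrixeq} wholesale, whereas you resolve the mismatch between $\Phi(B^*A)$ and $\Phi(A^*B)^*$ locally by noting that the positive image matrix is self-adjoint, which is equally valid.
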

\begin{proof}
(1) According to the proof of direction ($\Longleftarrow$) in the proposition \ref{matrixeq}, we only need to prove that every $2$-positive map is self-adjoint. It is known that matrix $\begin{pmatrix} |A| &A^* \\ A & |A^*| \end{pmatrix}$ is positive for every $A\in \mathscr{A}$ (see \cite[Theorem 2]{choi}). Using the $2$-positivity of $\Phi$, we see that the matrix
\begin{eqnarray*}
\begin{pmatrix} \Phi(|A|) &\Phi(A^*) \\ \Phi(A) & \Phi(|A^*|) \end{pmatrix}
\end{eqnarray*}
is positive and so self-adjoint. This implies that $\Phi(A^*)^*=\Phi(A)$.\\
(2) It immediately follows from Proposition \ref{matrixeq}.\\
\end{proof}
There is a close relationship between Lieb maps and $2$-positive maps. However, there are Lieb maps, which are not self-adjoint. For example, let $\Phi$ be a $2$-positive map such that for a non-selfadjoint element $A$, $\Phi(A)\neq 0$. Then the map $\Psi$ defined by
\[ \Psi(X) =
 \begin{cases}
 \Phi(X) & \quad X\neq A\\
 0 & \quad X=A
 \end{cases}
\]
is a Lieb map (by an easy application of Proposition \ref{matrixeq}), which is not a $*$-map since, by the proof of Corollary \ref{cor1}, each $2$-positive map is a $*$-map.\\
It is easy to show that a positive linear map $\Phi$ between unital $C^*$-algebras is strictly positive if and only if $\Phi(I)>0$. We claim that this fact holds for all Lieb maps between unital $C^*$-algebras. Indeed, if $A>0$ there is $ \varepsilon \in (0,1)$ such that $A\geq \varepsilon I$. Since $\Phi$ is monotone, we get $\Phi(A)\geq \Phi(\varepsilon I)$. We only need to show that $\Phi(\varepsilon I) $ is invertible.\\ First assume that $\Phi(I)=I$. The positivity and the Cauchy--Schwarz property of $\Phi$ implies that
\begin{eqnarray*}
\begin{bmatrix} \Phi(\varepsilon I) & \Phi(I) \\ \Phi(I) &\Phi(\varepsilon^{-1} I)
\end{bmatrix}= \begin{bmatrix} \Phi(| \sqrt{\varepsilon}I|^2) & \Phi(\sqrt{\varepsilon}\sqrt{\varepsilon^{-1}}I) \\ \Phi(\sqrt{\varepsilon}\sqrt{\varepsilon^{-1}}I)^* &\Phi(| \sqrt{\varepsilon^{-1}}I|^2)
\end{bmatrix}\geq 0.
\end{eqnarray*}
This shows that
\begin{eqnarray*}
\Phi(\varepsilon I)\geq \Phi(I) \Phi(\varepsilon^{-1}I)^{-1} \Phi(I)=\Phi(\varepsilon^{-1}I)^{-1}\geq \|\Phi(\varepsilon^{-1}I)\|^{-1}I,
\end{eqnarray*}
which implies that $\Phi(\varepsilon I)$ is invertible. Note that $\Phi(\varepsilon^{-1}I)\geq \Phi(I)=I$ is invertible.

In the case when $\Phi$ is not unital, define map $\Psi(X)=\Phi(I)^{-\frac{1}{2}} \Phi(X) \Phi(I)^{-\frac{1}{2}}$. It is easy to check that $\Psi$ is a unital Lieb map. By the first part of the proof, there exists $\alpha >0$ such that $\Psi(\varepsilon I) \geq \alpha I$. Since $\Phi(I)>0$, there exists a $\beta>0$ such that $\Phi(I) \geq \beta I$. Consequently, $\Phi(\varepsilon I)= \Phi(I)^\frac{1}{2} \Psi(\varepsilon I) \Phi(I)^\frac{1}{2}\geq \alpha \beta I>0$. It follows from Corollary \ref{cor1} (1) that the above fact also holds for a $2$-positive map between unital $C^*$-algebras.\\

\begin{corollary}[Choi's inequalities]\label{Choi1}
Let $\mathscr{A}$ and $\mathscr{B}$ be unital $C^*$-algebras. If $\Phi: \mathscr{A}\to \mathscr{B}$ is a Lieb map such that $\Phi(I)>0$, then the following statements hold:
\begin{enumerate}
\item[(a)] $ \Phi(|A|^2)\geq \Phi(A)^*\Phi(I)^{-1} \Phi(A)$ for every $A\in \mathscr{A}$.
In particular, if \newline $0< \Phi(I) \leq I$, then $ \Phi(|A|^2)\geq |\Phi(A)|^2$.
\item[(b)] If $\Phi$ is unital, then $\Phi(A^{-1}) \geq \Phi(A)^{-1}$ for every $A\in \mathscr{A}_{++}$.
\end{enumerate}
Furthermore, if $\Phi: \mathscr{A}\to \mathscr{B}$ is a $2$-positive map between unital $C^*$-algebras, then \text{\rm (a)} and \text{\rm (b)} are still true.
\end{corollary}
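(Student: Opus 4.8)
The plan is to obtain both inequalities by specializing the Cauchy--Schwarz block-positivity in the definition of a Lieb map to cleverly chosen arguments and then taking a Schur complement through \eqref{msm1}.

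For (a), I would feed the pair $(I,A)$ into the Cauchy--Schwarz condition, which yields
\[
\begin{bmatrix}\Phi(I) & \Phi(A)\\ \Phi(A)^* & \Phi(|A|^2)\end{bmatrix}\geq 0 .
\]
Since $\Phi(I)>0$ is invertible, \eqref{msm1} immediately gives $\Phi(|A|^2)\geq \Phi(A)^*\Phi(I)^{-1}\Phi(A)$. For the ``in particular'' assertion, if $0<\Phi(I)\leq I$ then $\Phi(I)^{-1}\geq I$ by antimonotonicity of operator inversion, whence $\Phi(A)^*\Phi(I)^{-1}\Phi(A)\geq \Phi(A)^*\Phi(A)=|\Phi(A)|^2$.

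For (b), when $\Phi$ is unital we have $\Phi(I)=I$; here I would apply the Cauchy--Schwarz condition to the pair $(A^{1/2},A^{-1/2})$, which is legitimate because $A\in\mathscr{A}_{++}$ makes $A^{\pm1/2}$ well-defined, positive and self-adjoint, and obtain
\[
\begin{bmatrix}\Phi(A) & I\\ I & \Phi(A^{-1})\end{bmatrix}\geq 0 .
\]
To pass to $\Phi(A^{-1})\geq\Phi(A)^{-1}$ via \eqref{msm1} one needs $\Phi(A)$ to be invertible; this is precisely where I would invoke the fact established in the discussion preceding the corollary, that a Lieb map with $\Phi(I)>0$ is strictly positive, so that $A>0$ forces $\Phi(A)>0$. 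The ``Furthermore'' clause is then free: by Corollary \ref{cor1}(1) a $2$-positive map between unital $C^*$-algebras is a Lieb map, so (a) holds under the hypothesis $\Phi(I)>0$ and (b) holds since unitality gives $\Phi(I)=I>0$.

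The argument is short, and the only delicate point is the invertibility of $\Phi(A)$ used in (b), which we import from the strict-positivity fact recalled just before the corollary; beyond that, the work is simply to keep track of which arguments go into the Cauchy--Schwarz matrix and to note that $\Phi(I)$, being positive, is self-adjoint (and equal to $I$ in the unital case).
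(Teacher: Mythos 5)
Your proposal is correct and follows essentially the same route as the paper: part (a) comes from the Cauchy--Schwarz block for the pair $(I,A)$ and the Schur complement via \eqref{msm1}, part (b) from the pair $(A^{1/2},A^{-1/2})$, and the $2$-positive case from Corollary \ref{cor1}(1). If anything, you are slightly more careful than the paper in (b), where you explicitly justify the invertibility of $\Phi(A)$ by the strict-positivity fact recalled before the corollary, a point the paper's proof leaves implicit.
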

\begin{proof}
(a) The positivity and the Cauchy--Schwarz property of $\Phi$ implies that
\begin{eqnarray*}
\begin{pmatrix}\Phi(I) & \Phi(A) \\ \Phi(A)^*&\Phi(A^*A) \end{pmatrix}= \begin{pmatrix}\Phi(I^*I) & \Phi(I^*A) \\ \Phi(I^*A)^*&\Phi(A^*A) \end{pmatrix}\geq 0,
\end{eqnarray*}
which ensures that
\begin{eqnarray*}
 \Phi(A^*A) \geq {\Phi}(A)^* \Phi(I)^{-1} {\Phi}(A).
\end{eqnarray*}
Second inequality follows from the fact that $\Phi(I)^{-1} \geq I$ whenever $0< \Phi(I) \leq I$.\\
(b) According to the positivity and the Cauchy--Schwarz property of $\Phi$, we get
\begin{eqnarray*}
\begin{pmatrix}\Phi(A) & \Phi(I) \\ \Phi(I)^*&\Phi(A^{-1}) \end{pmatrix}=\begin{pmatrix}\Phi(|A^\frac{1}{2}|^2) & \Phi(A^\frac{1}{2}A^{-\frac{1}{2}}) \\ \Phi(A^\frac{1}{2}A^{-\frac{1}{2}})^*&\Phi(|A^{-\frac{1}{2}}|^2) \end{pmatrix}\geq 0
\end{eqnarray*}
 for every $A>0$. Hence,
\begin{eqnarray*}
 \Phi(A^{-1}) \geq {\Phi}(I) \Phi(A)^{-1} {\Phi}(I)= \Phi(A)^{-1}.
\end{eqnarray*}
The statement about $2$-positive maps now follows directly from the statement about Lieb maps and Corollary \ref{cor1} (1).
\end{proof}
The next corollary is another consequence of Proposition \ref{matrixeq}.
\begin{corollary}\label{norm2}
Let $\mathscr{A}$ be a von Neumann algebra. Then an algebraic norm $\|\cdot \|$ on $\mathscr{A}$ is Lieb if and only if it is the $C^*$-norm on $\mathscr{A}$.
\end{corollary}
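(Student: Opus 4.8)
The plan is to apply Proposition \ref{matrixeq} in both directions, combined with the spectral calculus available in a von Neumann algebra and the elementary properties of an algebraic norm (submultiplicativity together with $\|I\|=1$). Throughout, write $\|\cdot\|_{C^*}$ for the $C^*$-norm of $\mathscr{A}$ and let $\|\cdot\|$ denote an arbitrary algebraic norm on $\mathscr{A}$.

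For the implication ($\Longleftarrow$) I would simply check the two clauses of Definition \ref{lieb} for $\|\cdot\|_{C^*}$ directly: monotonicity is the standard inequality $\|B\|_{C^*}\le\|A\|_{C^*}$ whenever $0\le B\le A$, while the Cauchy--Schwarz clause reduces to the scalar estimate $\|A^*A\|_{C^*}\|B^*B\|_{C^*}\ge\|A^*B\|_{C^*}^{2}$, which follows from $\|A^*B\|_{C^*}^{2}=\|B^*AA^*B\|_{C^*}\le\|A\|_{C^*}^{2}\|B\|_{C^*}^{2}$. (Equivalently, via Proposition \ref{matrixeq}: if $\begin{pmatrix}A&C\\ C^*&B\end{pmatrix}\ge 0$ then $C=A^{1/2}KB^{1/2}$ for a contraction $K$, so $\|C\|_{C^*}^{2}\le\|A\|_{C^*}\|B\|_{C^*}$, which is exactly positivity of the scalar block $\begin{pmatrix}\|A\|_{C^*}&\|C\|_{C^*}\\ \|C\|_{C^*}&\|B\|_{C^*}\end{pmatrix}$.)

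For the implication ($\Longrightarrow$), assume $\|\cdot\|$ is Lieb. I would first obtain the upper bound $\|X\|\le\|X\|_{C^*}$ for every $X$: since $\|X\|_{C^*}^{2}I\ge X^*X$, inequality \eqref{msm1} shows $\begin{pmatrix}\|X\|_{C^*}I&X\\ X^*&\|X\|_{C^*}I\end{pmatrix}\ge 0$, so Proposition \ref{matrixeq} gives $\begin{pmatrix}\|X\|_{C^*}\|I\|&\|X\|\\ \|X\|&\|X\|_{C^*}\|I\|\end{pmatrix}\ge 0$, that is, $\|X\|\le\|X\|_{C^*}\|I\|=\|X\|_{C^*}$. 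In particular every nonzero projection $E$ satisfies $\|E\|\le\|I\|=1$, and combined with $\|E\|=\|E^{2}\|\le\|E\|^{2}$ this forces $\|E\|=1$. Next, fix $A\in\mathscr{A}_{+}$ and put $\lambda:=\|A\|_{C^*}$. For $0<\varepsilon<\lambda$ the spectral projection $E=\chi_{[\lambda-\varepsilon,\lambda]}(A)$ belongs to $\mathscr{A}$ (this is where the von Neumann algebra hypothesis is used) and is nonzero because $\lambda\in\sigma(A)$; moreover $A\ge(\lambda-\varepsilon)E$, so monotonicity gives $\|A\|\ge(\lambda-\varepsilon)\|E\|=\lambda-\varepsilon$. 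Letting $\varepsilon\to0$ and combining with the upper bound yields $\|A\|=\|A\|_{C^*}$ for all positive $A$. Finally, for an arbitrary $X$, using that $X^*X\ge 0$ we get $\|X\|_{C^*}^{2}=\|X^*X\|_{C^*}=\|X^*X\|\le\|X^*\|\,\|X\|\le\|X^*\|_{C^*}\|X\|_{C^*}=\|X\|_{C^*}^{2}$, which forces $\|X\|=\|X^*\|=\|X\|_{C^*}$, completing the proof.

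I expect the main obstacle to be the lower bound on positive elements. The Lieb (block) condition is a Cauchy--Schwarz--type inequality and hence only ever produces \emph{upper} estimates for $\|\cdot\|$; the reverse inequality must be extracted from outside the Lieb axioms, namely from the abundance of spectral projections in a von Neumann algebra together with submultiplicativity and the normalization $\|I\|=1$. It is worth noting that the hypothesis $\|I\|=1$ is genuinely needed here: for any $c\ge 1$ the rescaled norm $c\,\|\cdot\|_{C^*}$ is still a submultiplicative Lieb norm on $\mathscr{A}$.
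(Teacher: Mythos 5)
Your argument is correct, but your forward direction takes a genuinely different route from the paper's. The paper disposes of ($\Longrightarrow$) in one step: applying the Cauchy--Schwarz clause to the pair $(I,A)$ gives positivity of $\begin{pmatrix}\|I\|&\|A\|\\ \|A\|&\|A^*A\|\end{pmatrix}$, hence $\|A^*A\|\geq\|A\|^2$; combined with submultiplicativity this forces $\|A^*A\|=\|A\|^2$, so $\|\cdot\|$ is a $C^*$-norm and therefore coincides with the given one by the uniqueness of $C^*$-norms on a $C^*$-algebra. You instead prove the two inequalities $\|X\|\leq\|X\|_{C^*}$ and, for positive $A$, $\|A\|\geq\|A\|_{C^*}$ by hand, the latter via spectral projections and the normalization $\|E\|=1$ for nonzero projections. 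Your route is more self-contained (it never invokes the uniqueness theorem for $C^*$-norms), but it pays for this with the von Neumann algebra hypothesis, which the paper's forward direction does not actually use --- in the paper that hypothesis is needed only for the factorization $C=A^{1/2}KB^{1/2}$ in the converse direction of Proposition \ref{matrixeq}. The backward directions are essentially the same computation (submultiplicativity plus the $C^*$-identity), packaged either as a direct check of Definition \ref{lieb}, as you do, or as the full block condition of Proposition \ref{matrixeq} via Lemma \ref{epsilpo}, as the paper does. Your closing observation that $\|I\|=1$ must be built into the notion of an algebraic norm (otherwise $c\,\|\cdot\|_{C^*}$ with $c>1$ is a submultiplicative Lieb norm that is not the $C^*$-norm) is a fair point: the paper uses this normalization implicitly when it passes from $\|I\|\,\|A^*A\|\geq\|A\|^2$ to $\|A^*A\|\geq\|A\|^2$.
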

\begin{proof}
Suppose that $\|\cdot \|$ is a Lieb map. According to the positivity of the matrix $\begin{pmatrix} I & A \\ A^* &A^*A \end{pmatrix}$, we have
\begin{eqnarray*}
 \begin{pmatrix}\| I\| & \|A\| \\ \|A\|^* &\|A^*A\| \end{pmatrix}\geq 0,
\end{eqnarray*}
whence we get
\begin{eqnarray*}
\|A^*A\| \geq \|A\|^2,
\end{eqnarray*}
which implies that $\|\cdot \|$ is a $C^*$-norm.\\ Conversely, if $\begin{pmatrix} A & X^* \\ X&B \end{pmatrix}\geq 0$, then by Lemma \ref{epsilpo}, we have
\begin{eqnarray*}
A\geq X^* (B+\varepsilon I)^{-1}X,
\end{eqnarray*}
for arbitrary small $\varepsilon >0$. The inequality $(B+\varepsilon I)^{-1} \geq \|B+\varepsilon I\|^{-1}I$ implies that
\begin{eqnarray*}
A\geq X^* \|B+\varepsilon I\|^{-1}X,
\end{eqnarray*}
whence $\|A\|\|B+\varepsilon I\| \geq \|X^*X\|=\|X^*\|\|X\|$, which ensures that 
\[\begin{pmatrix}\| A\| & \|X^*\| \\ \|X\| &\|B+\varepsilon I\| \end{pmatrix}\geq 0.\]
Hence,
$ \begin{pmatrix}\| A\| & \|X^*\| \\ \|X\| &\|B\| \end{pmatrix}\geq 0$.
\end{proof}
We remark that Corollary \ref{norm2} also holds if $\|\cdot \|$ is a $2$-positive norm on a unital $C^*$-algebra.

\section{$3$-positivity of non-linear maps }

This section gives some properties of $3$-positive maps. We begin by proving a property of $2$-positive linear maps for non-linear $3$-positive maps. \\
The following lemma is straightforward, so we omit its proof.
\begin{lemma}\label{kelidi}
Let $A\geq 0$ and $X$ be elements in a unital $C^*$-algebras. Then $\begin{bmatrix} 0 & X \\ X^* & A
\end{bmatrix}\geq 0$ if and only if $X=0$.
\end{lemma}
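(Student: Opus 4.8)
The plan is to separate the trivial implication from the substantive one and then extract $X=0$ from positivity by a one-parameter limiting argument. The direction $X=0\Longrightarrow\begin{bmatrix}0&X\\X^*&A\end{bmatrix}\geq 0$ is immediate: the matrix becomes $\begin{bmatrix}0&0\\0&A\end{bmatrix}$, and a block-diagonal element of $M_2(\mathscr{A})$ is positive exactly when each diagonal block is, which holds here since $A\geq 0$.

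For the converse, I would assume $\begin{bmatrix}0&X\\X^*&A\end{bmatrix}\geq 0$ and apply Lemma~\ref{epsilpo} with the upper-left block taken to be $0$. This is legitimate because that lemma only requires the two diagonal blocks to be positive (not invertible); indeed, handling a non-invertible diagonal block is precisely what the $\varepsilon$-regularization there is for, so the plain criterion $B\geq X^*A^{-1}X$ coming from \eqref{msm1} is not directly applicable. Lemma~\ref{epsilpo} then yields $A\geq X^*(0+\varepsilon I)^{-1}X=\varepsilon^{-1}X^*X$ for every $\varepsilon>0$, that is, $X^*X\leq\varepsilon A$ for all $\varepsilon>0$. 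Letting $\varepsilon\to 0^+$ gives $0\leq X^*X\leq 0$, hence $X^*X=0$, and therefore $X=0$ by the $C^*$-identity $\|X\|^2=\|X^*X\|$.

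A route that bypasses Lemma~\ref{epsilpo} entirely would be to use the positive square root $T^{1/2}$ of $T=\begin{bmatrix}0&X\\X^*&A\end{bmatrix}$ inside the $C^*$-algebra $M_2(\mathscr{A})$: writing $T^{1/2}=\begin{bmatrix}p&q\\q^*&r\end{bmatrix}$ with $p,r$ self-adjoint, the $(1,1)$-entry of $T=(T^{1/2})^2$ reads $p^2+qq^*=0$; since both summands are positive they vanish, so $p=q=0$ (again by the $C^*$-identity), and then the $(1,2)$-entry gives $X=pq+qr=0$. Either way there is essentially no obstacle — this is why the lemma is called straightforward; the only point that deserves a word of care is the use of a non-invertible diagonal block, which forces one to go through Lemma~\ref{epsilpo} or the square-root trick rather than \eqref{msm1}.
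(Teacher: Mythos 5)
Your proof is correct. The paper offers no proof to compare against --- it declares the lemma straightforward and omits the argument --- but your first route is exactly the natural one given the paper's toolkit: Lemma~\ref{epsilpo} is designed precisely to handle a non-invertible diagonal block, and specializing it to upper-left block $0$ gives $X^*X\leq\varepsilon A$ for every $\varepsilon>0$, whence $X^*X\leq 0$ by closedness of the positive cone, so $X^*X=0$ and $X=0$ by the $C^*$-identity. Your second, square-root argument is also valid and has the small advantage of being self-contained (it needs only that a sum of two positive elements vanishes iff both do, plus $p^2=0\Rightarrow p=0$ for self-adjoint $p$ and $qq^*=0\Rightarrow q=0$), at the cost of introducing $T^{1/2}$; either version is a legitimate replacement for the omitted proof, and both directions of the equivalence, including the trivial one, are handled correctly.
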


Choi \cite[Theorem 3.1]{choi3} showed that if $\Phi: \mathscr{A}\to \mathscr{B}$ is a unital $2$-positive linear map between unital $C^*$-algebras such that $\Phi(A^*A)= \Phi(A^*)\Phi(A)$ for some $A\in \mathscr{A}$, then $\Phi(XA)= \Phi(X)\Phi(A)$ for every $X\in \mathscr{A}$. We remark that this result does not hold for an arbitrary unital $2$-positive map. Indeed, for the $C^*$-norm of a $C^*$-algebra as a unital $2$-positive map (see Corollary \ref{norm2}), $\|A^*A\|=\|A^*\| \|A\|$ for every $A\in \mathscr{A}$. However, the equality $\|AX\|=\|A\| \|X\|$ is not valid, in general.

The following theorem gives a generalization of the above fact for non-linear $3$-positive maps.

\begin{theorem} \label{3positive}
Suppose that $\Phi: \mathscr{A}\to \mathscr{B}$ is a unital $3$-positive map. If for some $A\in \mathscr{A}$ equality $\Phi(A^*A)= \Phi(A)^*\Phi(A)$ holds, then
\begin{eqnarray*}
 \Phi(A^*X)=\Phi (A^*)\Phi (X){\text{\ \ and\ \ }} \Phi(XA)=\Phi (X)\Phi (A)
\end{eqnarray*}
for every $X \in \mathscr{A}$.
\end{theorem}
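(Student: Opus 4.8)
The plan is to mimic Choi's original argument for $2$-positive linear maps, but to compensate for the loss of linearity by working one dimension higher, i.e. with $3\times3$ matrices over $\mathscr A$ rather than $2\times2$ ones. The starting point is the observation that, since $\Phi$ is $3$-positive, it is in particular $2$-positive, hence a Lieb map (Corollary \ref{cor1}) satisfying Choi's inequalities (Corollary \ref{Choi1}); in particular the Cauchy--Schwarz block inequality
\begin{eqnarray*}
\begin{bmatrix} \Phi(I) & \Phi(A) & \Phi(X) \\ \Phi(A)^* & \Phi(A^*A) & \Phi(A^*X) \\ \Phi(X)^* & \Phi(X^*A) & \Phi(X^*X)\end{bmatrix}\geq 0
\end{eqnarray*}
holds for all $X\in\mathscr A$, because this is the image under $\Phi_3$ of the positive matrix $[C_i^*C_j]$ with $C_1=I,\ C_2=A,\ C_3=X$. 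Using $\Phi(I)=I$ and the equality hypothesis $\Phi(A^*A)=\Phi(A)^*\Phi(A)$, the top-left $2\times2$ block is exactly $\begin{bmatrix} I & \Phi(A)\\ \Phi(A)^* & \Phi(A)^*\Phi(A)\end{bmatrix}$, which is a "degenerate" positive block of the form $\begin{bmatrix} I & Y\\ Y^* & Y^*Y\end{bmatrix}$.

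The heart of the argument is then a Schur-complement / congruence manipulation on the $3\times3$ matrix above to extract the information that $\Phi(A^*X)=\Phi(A)^*\Phi(X)$. Concretely, I would conjugate the $3\times3$ positive matrix by the (bounded, though possibly non-invertible) operator matrix that subtracts $\Phi(A)^*$ times the first row/column from the second, i.e. multiply on the left by $\begin{bmatrix} I&0&0\\ -\Phi(A)^*&I&0\\ 0&0&I\end{bmatrix}$ and on the right by its adjoint. The new $(2,2)$ entry becomes $\Phi(A^*A)-\Phi(A)^*\Phi(A)=0$, the new $(2,1)$ and $(1,2)$ entries vanish, and the new $(2,3)$ entry becomes $\Phi(A^*X)-\Phi(A)^*\Phi(X)$. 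So the transformed matrix has a $2\times2$ submatrix (rows/columns $2$ and $3$) of the form $\begin{bmatrix} 0 & \Phi(A^*X)-\Phi(A)^*\Phi(X)\\ \big(\Phi(A^*X)-\Phi(A)^*\Phi(X)\big)^* & *\end{bmatrix}\geq 0$, and Lemma \ref{kelidi} forces $\Phi(A^*X)=\Phi(A)^*\Phi(X)$. Taking adjoints (note $\Phi$ is a $*$-map, being $2$-positive, by the proof of Corollary \ref{cor1}) gives $\Phi(X^*A)=\Phi(X)^*\Phi(A)$, and replacing $X$ by $X^*$ yields $\Phi(XA)=\Phi(X)\Phi(A)$ as well. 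One should also record, by the same Lemma \ref{kelidi} applied to the $2\times2$ block in rows/columns $1,2$, that $\Phi(A)$ indeed behaves like an "isometric column," which is what makes the congruence step legitimate.

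The step I expect to be the main obstacle is justifying the congruence/Schur-complement manipulation when $\Phi(I)=I$ but none of $\Phi(A)$, $\Phi(A^*A)$ need be invertible: in the linear case Choi can invoke a clean $2\times2$ Schur-complement, whereas here I must argue purely at the level of positive semidefinite block matrices and the annihilation lemma (Lemma \ref{kelidi}), being careful that "positivity is preserved under congruence $T^*MT$" is used only with a fixed bounded $T$ and that the resulting $(2,2)=0$ entry genuinely kills the off-diagonal via Lemma \ref{kelidi} rather than merely bounding it. A secondary subtlety is confirming that $3$-positivity (not just $2$-positivity) is actually needed and used — it enters precisely in getting the full $3\times3$ inequality with the three chosen entries $I,A,X$ simultaneously, which a merely $2$-positive map does not supply, explaining why the counterexample with the $C^*$-norm (which is $2$-positive but not $3$-positive) does not contradict the theorem.
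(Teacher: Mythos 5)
Your argument is correct and is essentially the paper's own proof: both start from the image under $\Phi_3$ of the Gram matrix of $\{I,A,X\}$, use the hypothesis $\Phi(A^*A)=\Phi(A)^*\Phi(A)$ to produce a positive block matrix with a vanishing diagonal entry, and invoke Lemma \ref{kelidi} to kill the off-diagonal entry $\Phi(A^*X)-\Phi(A^*)\Phi(X)$. The only cosmetic difference is that you perform the Schur-complement step as an explicit congruence $TMT^*$ with a unipotent $T$, while the paper reaches the same inequality by padding to a $4\times4$ matrix and citing the block-positivity criterion \eqref{msm1}; both routes are legitimate and equivalent.
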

\begin{proof}
Suppose that $A\in \mathscr{A}$ satisfies $\Phi(A^*A)= \Phi(A)^*\Phi(A)$. Let $X\in \mathscr{A}$. We have
\begin{eqnarray*}
\begin{bmatrix} A^*A & A^*X & A^* \\ X^*A & X^*X & X^* \\ A& X & I
\end{bmatrix} =\begin{bmatrix} A^*&0&0 \\ X^*&0&0 \\ I&0&0 \end{bmatrix}\begin{bmatrix} A&X&I\\ 0&0&0 \\ 0&0&0 \end{bmatrix}\geq 0.
\end{eqnarray*}
Since $\Phi$ is $3$-positive, we have
\begin{eqnarray}\label{marix3p}
\begin{bmatrix} \Phi(A^*A) & \Phi(A^*X) & \Phi(A^*) & 0 \\ \Phi(X^*A) & \Phi(X^*X) & \Phi(X^*) & 0\\ \Phi(A)& \Phi(X)& \Phi(I) &0 \\ 0 & 0 & 0 & \Phi(I)
\end{bmatrix}\geq 0.
\end{eqnarray}
Using the positivity of matrix (\ref{marix3p}), we deduce that
\begin{eqnarray*}
\begin{bmatrix} \Phi(A^*A) & \Phi(A^*X) \\ \Phi(X^*A) & \Phi(X^*X)
\end{bmatrix} &\geq & \begin{bmatrix} \Phi(A^*) & 0 \\ \Phi(X^*) & 0\end{bmatrix} \begin{bmatrix} \Phi(I) & 0\\ 0 & \Phi(I)
\end{bmatrix} ^{-1} \begin{bmatrix} \Phi(A) & \Phi(X) \\ 0 & 0\end{bmatrix} \\ &=& \begin{bmatrix} \Phi(A^*)\Phi(A) & \Phi(A^*)\Phi(X) \\ \Phi(X^*)\Phi(A) & \Phi(X^*)\Phi(X)
\end{bmatrix}.
 \end{eqnarray*}
Since $\Phi(A^*A)= \Phi(A)^*\Phi(A)$ and $\Phi$ is self-adjoint, we conclude that
\begin{eqnarray*}
0&\leq& \begin{bmatrix} \Phi (A^*A) & \Phi(A^*X) \\ \Phi (X^*A) & \Phi (X^*X)
\end{bmatrix} - \begin{bmatrix} \Phi (A^*)\Phi (A) & \Phi (A^*)\Phi (X) \\ \Phi (X^*)\Phi (A) & \Phi (X^*)\Phi (X)
\end{bmatrix}\\&=& \begin{bmatrix} 0& \Phi (A^*X)-\Phi (A^*)\Phi (X) \\ \Phi (X^*A)- \Phi (X^*)\Phi (A) &\Phi (X^*X)- \Phi (X^*)\Phi (X)
\end{bmatrix}.
 \end{eqnarray*}
 Since $\Phi(X^*X)-\Phi(X^*)\Phi(X) \geq 0$, by using Lemma \ref{kelidi}, we get $\Phi (A^*X)=\Phi (A^*)\Phi (X)$. Due to $\Phi$ is a $*$-map, $\Phi (XA)=\Phi (X)\Phi (A) $.
\end{proof}

\begin{corollary}
Let $\| \cdot\|$ be an algebraic norm on a unital $C^*$-algebra $\mathscr{A}$. If $\|\cdot \|$ is $3$-positive, then
\begin{eqnarray*}
\| XY\|=\|X\| \|Y\|
\end{eqnarray*}
for every $X,Y \in \mathscr{A}$.
\end{corollary}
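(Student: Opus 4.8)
The plan is to apply Theorem~\ref{3positive} with the $C^*$-algebra $\mathscr{A}$ in the role of both the domain and (via the scalar field) the codomain, treating the $3$-positive norm $\|\cdot\|:\mathscr{A}\to\mathbb{C}$ as the map $\Phi$. First I would observe that an algebraic norm satisfies $\|I\|=1$, since an algebraic norm is by definition submultiplicative with $\|I\|=\|I\cdot I\|\le\|I\|^2$, forcing $\|I\|\ge 1$, and one checks $\|I\|=1$ for a genuine norm on a unital algebra (or simply take this as part of the hypothesis "algebraic norm"); thus $\|\cdot\|$ is unital as a map into $\mathbb{C}$. By Corollary~\ref{norm2} together with the remark following it, a $3$-positive norm is in particular a $2$-positive $C^*$-norm, hence it coincides with the usual $C^*$-norm on $\mathscr{A}$ and satisfies the $C^*$-identity $\|A^*A\|=\|A\|^2$ for every $A\in\mathscr{A}$. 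Therefore the hypothesis $\Phi(A^*A)=\Phi(A)^*\Phi(A)$ of Theorem~\ref{3positive} holds for \emph{every} $A\in\mathscr{A}$, not merely for one.

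Next I would simply invoke Theorem~\ref{3positive}: since $\|\cdot\|$ is a unital $3$-positive map and $\|A^*A\|=\|A\|^2=\overline{\|A\|}\,\|A\|$ for each $A\in\mathscr{A}$, the theorem yields
\begin{eqnarray*}
\|XA\|=\|X\|\,\|A\|\qquad\text{for all }X,A\in\mathscr{A}.
\end{eqnarray*}
Relabelling $A$ as $Y$ gives $\|XY\|=\|X\|\,\|Y\|$ for all $X,Y\in\mathscr{A}$, which is the assertion.

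The only genuinely delicate point is confirming that Theorem~\ref{3positive} is applicable, i.e.\ that the $C^*$-norm really is a $3$-positive map in the sense of the paper — but this is exactly the content of Corollary~\ref{norm2} and its stated extension to $2$-positive (a fortiori $3$-positive) norms, so no new work is needed there. A secondary bookkeeping point is that $\Phi$ here takes values in the commutative $C^*$-algebra $\mathbb{C}$, so "$\Phi(A)^*\Phi(A)$" is just $|\,\|A\|\,|^2=\|A\|^2$ and the self-adjointness of $\Phi$ is automatic; one should note this so the invocation of Lemma~\ref{kelidi} inside the proof of Theorem~\ref{3positive} goes through verbatim. I expect the main (minor) obstacle to be purely expository: making sure the reader sees that the equality hypothesis of Theorem~\ref{3positive} is supplied \emph{for free} by the $C^*$-identity once $3$-positivity has been assumed.
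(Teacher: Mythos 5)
Your proposal is correct and follows exactly the route the paper intends: the corollary is left unproved there as an immediate consequence of Corollary~\ref{norm2} (a $3$-positive, hence $2$-positive, norm is the $C^*$-norm, so $\|A^*A\|=\|A\|^2=\|A\|^*\|A\|$ and $\|I\|=1$) combined with Theorem~\ref{3positive}. Your observations about unitality and the automatic self-adjointness of a scalar-valued $\Phi$ are exactly the bookkeeping needed, so nothing is missing.
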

\begin{example}
The map $|\cdot|$ is a $3$-positive norm on $\mathbb{C}$; see \cite[Section 7.5 Problem 4]{horn2}. To see an example of a $3$-positive norm on $M_n(\mathbb{C})$ we refer the reader to \cite[Proposition 6.2]{gunt}.
\end{example}
The next theorem, inspired by \cite[Theorem 3]{choi}, gives a result about the superadditivity of $3$-positive maps.
\begin{theorem}\label{3positive2}
Let $\Phi: \mathscr{A} \to \mathscr{B}$ be a $3$-positive map between unital $C^*$-algebras. Then
\begin{eqnarray}\label{alisup}
2\Phi(0)+ \Phi(A+B)\geq \Phi(A)+\Phi(B)
 \end{eqnarray}
for every positive elements $A,B \in \mathscr{A}$. In particular,
\begin{eqnarray*}
 \Phi(0)=0 \Longleftrightarrow \Phi(A+B)\geq \Phi(A)+\Phi(B).
\end{eqnarray*}
\end{theorem}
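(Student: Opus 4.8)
The plan is to manufacture a positive element of $M_3(\mathscr{A})$ whose entries are drawn from $A+B$, $A$, $B$, and $0$, push it through $\Phi_3$ using $3$-positivity, and then read off \eqref{alisup} by a scalar compression of the resulting positive $3\times 3$ matrix over $\mathscr{B}$. The matrix I would use is
\[
M=\begin{bmatrix} A+B & A & B \\ A & A & 0 \\ B & 0 & B\end{bmatrix}\in M_3(\mathscr{A}).
\]
First I would verify $M\geq 0$. This is immediate: $M$ is the sum of the two evidently positive matrices obtained by separating the contributions of $A$ and of $B$, namely $M=\begin{bmatrix} A^{1/2}\\ A^{1/2}\\ 0\end{bmatrix}\begin{bmatrix} A^{1/2}&A^{1/2}&0\end{bmatrix}+\begin{bmatrix} B^{1/2}\\ 0\\ B^{1/2}\end{bmatrix}\begin{bmatrix} B^{1/2}&0&B^{1/2}\end{bmatrix}$ (equivalently $M=R^*R$ with $R=\begin{bmatrix} A^{1/2}&A^{1/2}&0\\ B^{1/2}&0&B^{1/2}\end{bmatrix}$), where we use that $A,B\in\mathscr{A}_+$ so that the self-adjoint square roots $A^{1/2},B^{1/2}$ exist.

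Since $\Phi$ is $3$-positive, $\Phi_3(M)=[\Phi(M_{ij})]\geq 0$, i.e.
\[
\begin{bmatrix} \Phi(A+B) & \Phi(A) & \Phi(B) \\ \Phi(A) & \Phi(A) & \Phi(0) \\ \Phi(B) & \Phi(0) & \Phi(B)\end{bmatrix}\geq 0,
\]
the entries being self-adjoint because this matrix is positive (one may also recall from the proof of Corollary \ref{cor1} that a $2$-positive, hence $3$-positive, map is a $*$-map). Then I would compress by the ``vector'' $(I,-I,-I)$: for any positive $N=[N_{ij}]\in M_3(\mathscr{B})$ and any $b_1,b_2,b_3\in\mathscr{B}$ one has $\sum_{i,j}b_i^*N_{ij}b_j\geq 0$, and with $b_1=I$, $b_2=b_3=-I$ the nine terms collapse to
\[
\Phi(A+B)-\Phi(A)-\Phi(B)+2\Phi(0)\geq 0,
\]
which is exactly \eqref{alisup}.

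For the ``in particular'' clause, the implication $\Phi(0)=0\Rightarrow \Phi(A+B)\geq\Phi(A)+\Phi(B)$ is read off directly from \eqref{alisup}. Conversely, assuming $\Phi(A+B)\geq\Phi(A)+\Phi(B)$ for all $A,B\in\mathscr{A}_+$ and taking $A=B=0$ gives $\Phi(0)\geq 2\Phi(0)$, hence $\Phi(0)\leq 0$; since $0\in\mathscr{A}_+$ and $\Phi$ is positive, $\Phi(0)\geq 0$ as well, so $\Phi(0)=0$.

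I do not anticipate a serious obstacle: the only inventive step is guessing the block matrix $M$ and the compression vector $(I,-I,-I)$, after which the computation is purely formal. The one point to keep an eye on is that $3$-positivity — rather than complete positivity, as in the Ando--Choi precursor \cite[Theorem 3]{choi} — is all that is needed here, which is clear because $M$ has size $3$, and that $M$ has all its entries in $\mathscr{A}_+$, so that $\Phi$ is only evaluated where the hypothesis of the theorem applies.
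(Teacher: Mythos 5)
Your proof is correct and is essentially the paper's own argument: your matrix $M$ is exactly the paper's block matrix $\begin{bmatrix} A & A & 0\\ A & A+B & B\\ 0 & B & B\end{bmatrix}$ after permuting the indices, and your compression by $(I,-I,-I)$ corresponds to the paper's compression by $(-1,1,-1)$. The only (harmless) addition is your explicit handling of the converse in the ``in particular'' clause, which the paper dismisses as evident.
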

\begin{proof}
Let $A, B$ be positive. It follows from $\begin{bmatrix} A &A &0 \\ A & A & 0 \\ 0 & 0 &0 \end{bmatrix}\geq0 $ and $\begin{bmatrix} 0 &0 &0 \\ 0 & B & B \\ 0 & B &B \end{bmatrix}\geq0 $ that
\begin{eqnarray*}
\begin{bmatrix} A &A &0 \\ A & A+B & B \\ 0 & B &B \end{bmatrix}\geq0\,.
\end{eqnarray*}
Since $\Phi$ is $3$-positive, we have
\begin{eqnarray*}
 \begin{bmatrix} \Phi(A) &\Phi(A) &\Phi(0) \\ \Phi(A) & \Phi(A+B) & \Phi(B) \\ \Phi(0) & \Phi(B) &\Phi(B) \end{bmatrix} \geq 0.
\end{eqnarray*}
Therefore,
\begin{eqnarray*}
 0 &\leq& \begin{bmatrix} -1 & 1 & -1 \\ 0 & 0 & 0 \\ 0 & 0 &0 \end{bmatrix} \begin{bmatrix} \Phi(A) &\Phi(A) &\Phi(0) \\ \Phi(A) & \Phi(A+B) & \Phi(B) \\ \Phi(0) & \Phi(B) &\Phi(B) \end{bmatrix} \begin{bmatrix} -1 & 0 & 0 \\ \ \ 1 &0 & 0 \\ -1 & 0 & 0 \end{bmatrix}\\ &=&2 \Phi(0)+\Phi(A+B)-\Phi(A)-\Phi(B),
 \end{eqnarray*}
which ensures that $2 \Phi(0)+\Phi(A+B)\geq \Phi(A)+\Phi(B) $. \\
The rest is evident.
\end{proof}
Theorems \ref{3positive} and \ref{3positive2} show that the assumption of $3$-positivity is a strong condition on the norm of a $C^*$-algebra. Recall that a norm $\|\cdot\|$ on a normed space $\mathcal{X}$ is called strictly convex if for all non-zero elements $X,Y\in \mathcal{X}$, $\|X+Y\|=\|X\|+\|Y\|$ implies $X=\lambda Y$ for some $\lambda \in \mathbb{C}$.
\begin{corollary}
If $(\mathscr{A},\|\cdot \|)$ is a unital $C^*$-algebra and $\|\cdot \|$ is a strictly convex $3$-positive map, then
 $\mathscr{A}=\mathbb{C}$.
\end{corollary}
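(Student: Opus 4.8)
The plan is to use Theorem~\ref{3positive2} to turn $3$-positivity of the norm into superadditivity on the positive cone, play this off against the triangle inequality to obtain exact additivity there, and then invoke strict convexity to collapse the positive cone onto a single ray.

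First, regard $\|\cdot\|$ as a $3$-positive map from $\mathscr{A}$ to the unital $C^*$-algebra $\mathbb{C}$. Since $\|0\|=0$, the ``in particular'' part of Theorem~\ref{3positive2} gives $\|A+B\|\ge\|A\|+\|B\|$ for all $A,B\in\mathscr{A}_+$, while the triangle inequality supplies the reverse estimate; hence $\|A+B\|=\|A\|+\|B\|$ for every pair of positive elements. Now fix a nonzero $A\in\mathscr{A}_+$. Applying this equality to the nonzero positive elements $A$ and $I$ yields $\|A+I\|=\|A\|+\|I\|$, so strict convexity of $\|\cdot\|$ forces $A=\lambda I$ for some $\lambda\in\mathbb{C}$; as $A$ is self-adjoint and positive, necessarily $\lambda\ge 0$. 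Together with the trivial case $A=0=0\cdot I$, this shows $\mathscr{A}_+=\{\lambda I:\lambda\ge 0\}$.

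It remains to note that a unital $C^*$-algebra is linearly spanned by its positive part: each self-adjoint $H\in\mathscr{A}$ is a difference of two positive elements, hence lies in $\mathbb{R}I$, and each $X\in\mathscr{A}$ equals $H_1+\mathrm{i}H_2$ with $H_1,H_2$ self-adjoint, hence lies in $\mathbb{C}I$. Therefore $\mathscr{A}=\mathbb{C}I\cong\mathbb{C}$. There is essentially no serious obstacle in this argument once Theorem~\ref{3positive2} is in hand; the only spots that deserve a moment's care are the degenerate case $A=0$ in the application of strict convexity and the implicit use of $I\neq 0$ (so that strict convexity applies to the pair $(A,I)$), which is harmless since the zero algebra is not at issue.
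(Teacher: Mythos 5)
Your proof is correct and follows essentially the same route as the paper: apply Theorem~\ref{3positive2} (with $\|0\|=0$) together with the triangle inequality to get $\|P+I\|=\|P\|+\|I\|$ for positive $P$, invoke strict convexity to conclude $P=cI$, and finish using that the positive elements linearly generate $\mathscr{A}$. Your version is merely a little more explicit about the reverse (triangle) inequality and the non-degeneracy needed to apply strict convexity.
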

\begin{proof}
Assume that $P$ is a positive element of $\mathscr{A}$. Employing Theorem \ref{3positive2} we get $\|P\|+\|I\|=\|P+I\|$. Since $\|\cdot\|$ is strictly convex, we have $P=cI$ for some positive number $c$. The assertion follows from this and the fact that positive elements of $\mathscr{A}$ linearly generate it.

\end{proof}

\section{ non-linear maps of class $S_{{\rm mon} +}^{(n)}$ }

In this section, we study the strong superadditivity and the starshapness of maps in some classes of positive maps. These classes are between the classes of $n$-positive and $2n$-positive maps and enjoy of some interesting properties. Ando et. al \cite[Theorem 3]{choi} showed that if $\Phi$ is a $4$-positive map, then $\Phi$ is strongly superadditive on positive elements.
(In that theorem, they assumed that $\Phi$ is completely positive, however, in the proof they used only the $4$-positivity of $\Phi$.) \\
We say a positive map $\Phi: \mathscr{A} \to \mathscr{B}$ is in the class $S_{{\rm mon} +}^{(n)}$ whenever the map $\Phi_n$ is monotone on positive elements of $M_n(\mathscr{A})$. It is easy to show that every $2n$-positive map is in the class $S_{{\rm mon} +}^{(n)}$. Indeed, if $[A_{ij}]$ and $[B_{ij}]$ are two positive elements in $M_n(\mathscr{A})$ such that $[A_{ij}]\geq[B_{ij}]$, then we have $\begin{bmatrix} [A_{ij}] &[B_{ij}] \\ [B_{ij}] &[B_{ij}]
\end{bmatrix}\geq 0$ (see (\ref{positivee})). Moreover, the $2n$-positivity of $\Phi$ implies the $2$-positivity of $\Phi_n$. Hence,
\begin{eqnarray*}
\begin{bmatrix} \Phi_n\big([A_{ij}]\big) & \Phi_n\big([B_{ij}]\big) \\ \Phi_n\big( [B_{ij}]\big) &\Phi_n\big( [B_{ij}]\big)
\end{bmatrix}=\begin{bmatrix} [\Phi(A_{ij})] & [\Phi(B_{ij})] \\ [\Phi(B_{ij})] & [\Phi(B_{ij})]
\end{bmatrix} \geq 0,
\end{eqnarray*}
which implies that $ \Phi_n\big([A_{ij}]\big)\geq\Phi_n\big([B_{ij}]\big) $. It is however unknown to the authors whether every $3$-positive map is in the class $S_{{\rm mon} +}^{(2)}$.\\ There are several examples of maps in the class $S_{{\rm mon} +}^{(2)}$, which are not $4$-positive. For example, (i) power functions $\Phi_p: \mathbb{C} \to \mathbb{C}$ defined by $\Phi_p(x)=|x|^p\,\,(1<p<2)$ are in the class $S_{{\rm mon} +}^{(2)}$ (see \cite[Theorem 5.1]{hiai}), however, $\Phi_p$ $(1<p<2)$ are only $3$-positive but not $4$-positive (see \cite[Theorem 6.3.9]{horn}). Note that \cite[Theorem 6.3.9]{horn} states that the Hadamard power $[a_{ij}^p]$ of a positive semidefinite $3\times 3$ matrix $[a_{ij}]$ with nonnegative entries is positive semidefinite for $p\geq 1$. Hence, according to \cite[ Section 7.5 Problem 4]{horn2}, positive semidefiniteness of a $3\times 3$ matrix $[a_{ij}]$ implies positive semidefiniteness of $[|a_{i,j}|]$, which ensures $\Phi_p$ $(1<p<2)$ is $3$-positive; (ii) every positive semidefinite matrix $P\in M_n(\mathbb{C})$ induces a map $\phi_{P}: M_n(\mathbb{C}) \to \mathbb{C}$ defined by $\phi_{P}(A)=|{\rm tr}(AP)|$ such that it is a $3$-positive semi--norm on $M_n(\mathbb{C})$ (see \cite[Lemma 6.1]{gunt}). We show that $\phi_P$ is in the class $S_{{\rm mon} +}^{(2)}$. If $\begin{bmatrix} A & C \\ C^* &B \end{bmatrix}\geq \begin{bmatrix} A'& C'\\ C'^* &B' \end{bmatrix}\geq 0 $ are in $M_{2n}(\mathbb{C})$, then we have $ \begin{bmatrix} P^{\frac{1}{2}}A P^{\frac{1}{2}}& P^{\frac{1}{2}}CP^{\frac{1}{2}} \\ P^{\frac{1}{2}}C^* P^{\frac{1}{2}}&P^{\frac{1}{2}}BP^{\frac{1}{2}} \end{bmatrix}\geq \begin{bmatrix} P^{\frac{1}{2}}A'P^{\frac{1}{2}} & P^{\frac{1}{2}}C'P^{\frac{1}{2}} \\ P^{\frac{1}{2}} C'^*P^{\frac{1}{2}} &P^{\frac{1}{2}}B'P^{\frac{1}{2}} \end{bmatrix}$. Since the trace is completely positive, we get $ \begin{bmatrix} {\rm tr}(AP)& {\rm tr}( C P) \\ {\rm tr}( C P)^*& {\rm tr}( BP) \end{bmatrix}\geq \begin{bmatrix} {\rm tr}(A'P) & {\rm tr}( C' P) \\ {\rm tr}( C' P)^* & {\rm tr}(B' P) \end{bmatrix}$. This shows $\phi_{P}$ is in the class $S_{{\rm mon} +}^{(2)}$, since the map $|\cdot |$ on $\mathbb{C}$ is in the class $S_{{\rm mon} +}^{(2)}$ (see Example \ref{exfin}). However, $\phi_{P}$ can not be $4$-positive (see \cite[Corollary 2.3]{gunt}); (iii) It is easy to check that every positive linear functional $\varphi :\mathscr{A}\longrightarrow \mathbb{C}$ on a $C^*$-algebra induces a non-linear $3$-positive map $\Phi:\mathscr{A}\longrightarrow \mathbb{C}$ that belongs to the class $S_{{\rm mon} +}^{(2)}$ given by $\Phi(A)=|\varphi(A)|$, since every positive linear functional is completely positive and the map $|\cdot |$ on $\mathbb{C}$ is $3$-positive and in the class $S_{{\rm mon} +}^{(2)}$.

We now show that if $\Phi: \mathscr{A} \to \mathscr{B}$ is in the class $S_{{\rm mon} +}^{(2)}$, then it is strongly superadditive on positive elements of $\mathscr{A}$.\\
 Let $A,B,C \in \mathscr{A}_+$. Clearly,
\begin{eqnarray*}
\begin{bmatrix} A+B &A+B \\ A+B & A+B+C \end{bmatrix} \geq \begin{bmatrix} A & A \\ A& A+C \end{bmatrix}.
\end{eqnarray*}
Since $\Phi$ is in class $S_{{\rm mon} +}^{(2)}$, we have
\begin{eqnarray*}
\begin{bmatrix} \Phi(A+B) &\Phi(A+B) \\ \Phi(A+B) & \Phi(A+B+C) \end{bmatrix} \geq \begin{bmatrix} \Phi(A) & \Phi(A) \\ \Phi(A)& \Phi(A+C) \end{bmatrix},
\end{eqnarray*}
which implies that
\begin{eqnarray*}
\begin{bmatrix} \Phi(A+B) -\Phi(A) &\Phi(A+B)-\Phi(A) \\ \Phi(A+B)-\Phi(A) & \Phi(A+B+C)-\Phi(A+C) \end{bmatrix} \geq 0.
\end{eqnarray*}
Therefore, by using (\ref{positivee}), we get
\begin{eqnarray}\label{supchoi}
\Phi(A+B+C)-\Phi(A+C) \geq \Phi(A+B) -\Phi(A),
\end{eqnarray}
whence
\begin{eqnarray}\label{super222}
\Phi(A+B+C) +\Phi(A) \geq \Phi(A+B)+\Phi(A+C)
\end{eqnarray}
as required.

Inequality (\ref{supchoi}) shows that if $\Phi$ is in the class $S_{{\rm mon} +}^{(2)}$, then inequality (\ref{alisup}) turns into
\begin{eqnarray}\label{supalii}
\Phi(0)+ \Phi(A+B)\geq \Phi(A)+\Phi(B).
\end{eqnarray}
We give an example to show that inequality (\ref{supalii}) is sharp. Let $\mathscr{A}, \mathscr{B}$ be $C^*$-algebras and $C\in \mathscr{B}$ be a positive element. Consider positive map $\Phi: \mathscr{A} \to \mathscr{B}$ by $\Phi(A)= C$. Clearly $\Phi$ is a completely positive map and
\begin{eqnarray}
\Phi(0)+\Phi(A+B)= \Phi(A)+\Phi(B).
\end{eqnarray}
Next, we show that inequality (\ref{supchoi}) does not hold, in general if we reduce our assumption to the $2$-positivity of $\Phi$.
\begin{example}
Let $C[0,1]$ be the commutative $C^*$-algebra of complex-valued functions on $[0,1]$. It follows from Corollary \ref{norm2} that the $C^*$-norm of $C[0,1]$ is $2$-positive. Taking $ f(x)= 0, \ g(x)= x$ and $h(x)= 1-x$, we observe that
\begin{eqnarray*}
\| f\| + \| f+g+h \| = 1 < 2= \| f+g\| +\| f +h\|.
\end{eqnarray*}
Hence, inequality (\ref{supchoi}) is not valid for $\| \cdot \|: C[0,1] \to \mathbb{C}$ as a $2$-positive map.
\end{example}

The next theorem gives a result about the starshapeness of some classes of positive maps. The theorem can be obtained even if we only assume that the continuous map $\Phi:\mathscr{A}\to \mathscr{B}$ is strongly superadditive map on any (closed under addition) subset $\mathscr{X}$ of $\mathscr{A}$ with $\Phi(0)=0$. Hence, the strong superadditivity of a continuous map $\Phi$ (with $\Phi(0)=0$) on a (closed under addition) subset $\mathscr{X}$ of $\mathscr{A}$ implies the starshapeness of $\Phi$ on $\mathscr{X}$.
\begin{theorem}\label{lineali}
Let $\mathscr{A}, \mathscr{B}$ be two $C^*$-algebras. If $\Phi:\mathscr{A}\to \mathscr{B}$ is a continuous map in the class $S_{{\rm mon} +}^{(2)}$, then the following statements are equivalent:
\begin{enumerate}
\item $\Phi(0)=0$,
\item $\Phi(\alpha A)\leq \alpha \Phi(A) {\text{\ \ for every } } \alpha \in (0,1] \text{\ and\ } A\in \mathscr{A}_+$,
\item $\Phi(\alpha A)\geq \alpha \Phi(A) {\text{\ \ for every } } \alpha \in [1,\infty) \text{\ and\ } A\in \mathscr{A}_+$.
\end{enumerate}
\end{theorem}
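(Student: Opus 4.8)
The plan is to prove the cycle of implications $(1)\Rightarrow(2)\Rightarrow(3)\Rightarrow(1)$, of which only the first carries real content. For $(3)\Rightarrow(1)$: since $0\in\mathscr{A}_+$ and $\Phi$ is positive, $\Phi(0)\geq 0$, while applying (3) with $A=0$ and any $\alpha>1$ gives $\Phi(0)=\Phi(\alpha\cdot 0)\geq\alpha\,\Phi(0)$, so $(\alpha-1)\Phi(0)\leq 0$ and hence $\Phi(0)=0$. For $(2)\Rightarrow(3)$: given $\alpha\geq 1$ and $A\in\mathscr{A}_+$, apply (2) with the scalar $\alpha^{-1}\in(0,1]$ and the positive element $\alpha A$ to get $\Phi(A)=\Phi\bigl(\alpha^{-1}(\alpha A)\bigr)\leq\alpha^{-1}\Phi(\alpha A)$, that is, $\alpha\,\Phi(A)\leq\Phi(\alpha A)$.

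The heart of the matter is $(1)\Rightarrow(2)$. Assume $\Phi(0)=0$. Since $\Phi$ lies in $S_{{\rm mon}+}^{(2)}$, we have shown above (see \eqref{super222}) that $\Phi$ is strongly superadditive on $\mathscr{A}_+$. Fix $Y\in\mathscr{A}_+$ and put $a_n:=\Phi(nY)\in\mathscr{B}_+$ for integers $n\geq 0$, so $a_0=\Phi(0)=0$. Applying strong superadditivity with $A=(n-1)Y$ and $B=C=Y$ yields $a_{n+1}+a_{n-1}\geq 2a_n$; that is, the successive differences $d_n:=a_n-a_{n-1}$ form a nondecreasing sequence of self-adjoint elements of $\mathscr{B}$. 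The next step is the elementary ``discrete chord inequality'': for all integers $0\leq p\leq q$,
\begin{eqnarray*}
q\,a_p\leq p\,a_q,
\end{eqnarray*}
which follows by writing $p\,a_q-q\,a_p=p\sum_{j=p+1}^{q}d_j-(q-p)\sum_{j=1}^{p}d_j$ and bounding each of the $q-p$ terms of the first sum below by $d_p$ and each of the $p$ terms of the second sum above by $d_p$, so that $p\,a_q-q\,a_p\geq p(q-p)d_p-(q-p)p\,d_p=0$. (This manipulation uses only that the order on $\mathscr{B}_{\rm sa}$ is compatible with addition and with scaling by nonnegative reals.)

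It then remains to transfer the chord inequality to arbitrary scalars. For a rational $\alpha=p/q\in(0,1]$ and $A\in\mathscr{A}_+$, applying the chord inequality to $Y:=q^{-1}A$ gives $a_q=\Phi(A)$ and $a_p=\Phi(\alpha A)$, hence $\Phi(\alpha A)\leq\alpha\,\Phi(A)$. For an arbitrary $\alpha\in(0,1]$, choose rationals $\alpha_k\in(0,1]$ with $\alpha_k\to\alpha$; continuity of $\Phi$ gives $\Phi(\alpha_k A)\to\Phi(\alpha A)$ and $\alpha_k\Phi(A)\to\alpha\,\Phi(A)$ in norm, and since the cone $\mathscr{B}_+$ is norm-closed, letting $k\to\infty$ in $\alpha_k\Phi(A)-\Phi(\alpha_k A)\geq 0$ yields (2). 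As noted before the statement, this argument in fact uses only continuity, strong superadditivity on $\mathscr{A}_+$, and $\Phi(0)=0$.

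The main obstacle is the step producing $q\,a_p\leq p\,a_q$ in the partially ordered space $\mathscr{B}_{\rm sa}$: one has to be sure that the ``convexity'' of the sequence $(a_n)$ is genuinely available from the single inequality \eqref{super222}, and that the chord estimate survives passage from $\leq$ on $\mathbb{R}$ to the L\"owner order — which it does, since only monotonicity of addition and of multiplication by nonnegative scalars is invoked. Everything else (the two trivial implications and the density-plus-continuity limiting argument) is routine.
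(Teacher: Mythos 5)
Your proof is correct, and it reaches the key implication $(1)\Rightarrow(2)$ by a genuinely different route than the paper. The paper fixes $A$ and studies the set $\Gamma=\{\alpha\in[0,1]\mid \Phi(\alpha A)\leq\alpha\Phi(A)\}$: a single application of the strong superadditivity inequality \eqref{super222} (with $\beta A$ in place of $A$ and $\tfrac{\gamma-\beta}{2}A$ in place of both $B$ and $C$) shows $\Gamma$ is closed under midpoints, and since $\Gamma$ is closed and contains $\{0,1\}$, dyadic density forces $\Gamma=[0,1]$. You instead iterate \eqref{super222} along the arithmetic progression $0,Y,2Y,\dots$ to get convexity of the sequence $a_n=\Phi(nY)$, prove the chord inequality $qa_p\leq pa_q$ by a purely order-theoretic telescoping argument (valid in $\mathscr{B}_{\rm sa}$ since only addition and nonnegative scaling are used), specialize $Y=q^{-1}A$ to handle rational $\alpha$, and finish by density of the rationals, continuity, and norm-closedness of $\mathscr{B}_+$. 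The two arguments are the same in spirit — both are ``midpoint convexity plus continuity'' exploiting \eqref{super222} and $\Phi(0)=0$ — but your version makes the discrete convexity lemma fully explicit where the paper leaves the dyadic-density step implicit, at the cost of being somewhat longer; your cyclic organization $(1)\Rightarrow(2)\Rightarrow(3)\Rightarrow(1)$ also differs harmlessly from the paper's $(2)\Leftrightarrow(3)$ via $\alpha\mapsto\alpha^{-1}$ and $(2)\Rightarrow(1)$ via $\Phi(\tfrac1n A)\leq\tfrac1n\Phi(A)$ with $n\to\infty$. Both approaches confirm the remark preceding the theorem that only continuity, strong superadditivity on a subset closed under addition, and $\Phi(0)=0$ are actually needed.
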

\begin{proof}
 (1) $\Longrightarrow$ (2).
Let $\alpha \in [0,1]$ and $A\in \mathscr{A}_+$. Set $\Gamma:=\{ \alpha \in [0,1] \mid \Phi(\alpha A) \leq \alpha \Phi(A)\}$. Trivially $\{0,1\} \subset \Gamma $ and $\Gamma$ is closed. Since $[0,1]$ is convex, to see $\Gamma =[0,1]$ it is enough to show that $\beta, \gamma \in \Gamma$ implies that $\dfrac{\beta +\gamma}{2} \in \Gamma $.\\
Without loss of generality, we may assume that $\beta \leq \gamma$. Since $\Phi$ is in the class $S_{{\rm mon} +}^{(2)}$, it follows from inequality (\ref{super222}) with $\beta A$ instead of $A$ and $\dfrac{\gamma-\beta}{2}A$ instead of both $B$ and $C$ that
\begin{eqnarray*}
 \Phi(\gamma A) +\Phi(\beta A) \geq \Phi\left(\dfrac{\beta +\gamma }{2}A\right)+\Phi\left(\dfrac{\beta +\gamma }{2}A\right).
\end{eqnarray*}
Moreover, since $\beta, \gamma \in \Gamma$, we get
\begin{eqnarray*}
\gamma\Phi( A) +\beta \Phi(A) \geq 2 \Phi\left(\dfrac{\beta +\gamma }{2}A\right),
\end{eqnarray*}
which ensures that $\dfrac{\beta +\gamma}{2} \in \Gamma $.\\
 (2) $\Longleftrightarrow$ (3).
It is enough replace $\alpha$ by $\dfrac{1}{\alpha}$.\\
(2) $\Longrightarrow$ (1). We have $\Phi(\dfrac{1}{n}A) \leq \dfrac{1}{n}\Phi(A)$. Letting $n \longrightarrow \infty$ and using the continuity of $\Phi$ we get (1).
\end{proof}

The following lemma and Theorem \ref{mainnn} give the homogeneity and the linearity of some positive maps belonging to certain classes of maps.
We denote the closed unit ball of $\mathbb{C}$ by $\mathbb{C}_1$.
\begin{lemma}\label{mainn}
Let $\mathscr{A}, \mathscr{B}$ be two unital $C^*$-algebras, $\Phi:\mathscr{A}\to \mathscr{B}$ be a continuous unital $3$-positive map in the class $S_{{\rm mon} +}^{(2)}$ and $\Phi(0)=0$. Then \begin{enumerate}
\item if $\Phi(\alpha I)= \alpha I$ for some $\alpha \in \mathbb{C}_1 \cup \mathbb{R}_+$, then $\Phi(\alpha A) =\alpha \Phi(A)$ for every $A \in \mathscr{A}$,
\item if $\Phi(\alpha I)= \alpha I$ for some $\alpha \in \mathbb{C}_1 \cup \mathbb{R}_+$ with $|\alpha|\neq 0,1 $, then $\Phi(\beta A +B)=\beta \Phi(A) +\Phi(B)$ for every $\beta \in \mathbb{R}_+$ and $A,B \in \mathscr{A}_+$.
\end{enumerate}
\end{lemma}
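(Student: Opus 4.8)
The strategy is to reduce both assertions to Theorem~\ref{3positive} (the multiplicativity criterion for $3$-positive maps), Theorem~\ref{lineali} (starshapedness), and Corollary~\ref{Choi1}(b), using throughout that a unital $3$-positive map is $2$-positive, hence a $*$-map and a Lieb map (Corollary~\ref{cor1}).

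\textbf{Part (1).} I would show that $A:=\alpha I$ satisfies the hypothesis $\Phi(A^*A)=\Phi(A)^*\Phi(A)$ of Theorem~\ref{3positive}; that theorem then gives $\Phi(\alpha X)=\Phi(X\cdot\alpha I)=\Phi(X)\Phi(\alpha I)=\alpha\Phi(X)$ for every $X$, which is the claim. Since $\Phi$ is a $*$-map, $\Phi(\bar\alpha I)=\Phi(\alpha I)^*=\bar\alpha I$; feeding the positive matrix $\bigl[\begin{smallmatrix} I^*I & I^*(\alpha I)\\ (\alpha I)^*I & (\alpha I)^*(\alpha I)\end{smallmatrix}\bigr]=\bigl[\begin{smallmatrix} I & \alpha I\\ \bar\alpha I & |\alpha|^2 I\end{smallmatrix}\bigr]$ through the $2$-positive map $\Phi_2$ and using $\Phi(I)=I$, $\Phi(\alpha I)=\alpha I$, $\Phi(\bar\alpha I)=\bar\alpha I$ shows $\bigl[\begin{smallmatrix} I & \alpha I\\ \bar\alpha I & \Phi(|\alpha|^2 I)\end{smallmatrix}\bigr]\geq 0$, whence $\Phi(|\alpha|^2I)\geq|\alpha|^2I$ by \eqref{msm1}. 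If $|\alpha|\leq 1$, Theorem~\ref{lineali}(2) gives $\Phi(|\alpha|^2I)\leq|\alpha|^2\Phi(I)=|\alpha|^2I$, so $\Phi(|\alpha|^2I)=|\alpha|^2I=\Phi(\alpha I)^*\Phi(\alpha I)$ and Theorem~\ref{3positive} applies. The only remaining possibility is $\alpha\in\mathbb{R}_+$ with $\alpha>1$; then I would first note $\Phi(\alpha^{-1}I)\geq\Phi(\alpha I)^{-1}=\alpha^{-1}I$ by Corollary~\ref{Choi1}(b) and $\Phi(\alpha^{-1}I)\leq\alpha^{-1}\Phi(I)=\alpha^{-1}I$ by Theorem~\ref{lineali}(2), hence $\Phi(\alpha^{-1}I)=\alpha^{-1}I$; since $\alpha^{-1}\in(0,1)$ this falls under the case just done, so $\Phi(\alpha^{-1}X)=\alpha^{-1}\Phi(X)$ for all $X$, and substituting $\alpha X$ for $X$ gives $\Phi(\alpha X)=\alpha\Phi(X)$.

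\textbf{Part (2), homogeneity step.} By Part (1), $\Phi(\alpha X)=\alpha\Phi(X)$; applying Part (1) to $\bar\alpha$ (valid since $\Phi(\bar\alpha I)=\bar\alpha I$) also gives $\Phi(\bar\alpha X)=\bar\alpha\Phi(X)$, hence $\Phi(|\alpha|^2X)=|\alpha|^2\Phi(X)$. Setting $r_0:=|\alpha|^2$ if $|\alpha|<1$ and $r_0:=|\alpha|^{-2}$ otherwise (one of these lies in $(0,1)$ because $|\alpha|\neq 0,1$), substitution yields $\Phi(r_0^{\,n}X)=r_0^{\,n}\Phi(X)$ for all $n\in\mathbb{Z}$. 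Now fix $A\in\mathscr{A}_+$ and $r\in(0,1]$; choosing $n\geq 0$ with $r_0^{\,n+1}<r\leq r_0^{\,n}$, Theorem~\ref{lineali}(2) applied to the scalar $r/r_0^{\,n}\in(0,1]$ gives $\Phi(rA)\leq r\Phi(A)$, while Theorem~\ref{lineali}(3) applied to $r/r_0^{\,n+1}\geq 1$ gives $\Phi(rA)\geq r\Phi(A)$; hence $\Phi(rA)=r\Phi(A)$, and the case $r>1$ follows from $\Phi(A)=r^{-1}\Phi(rA)$. Thus $\Phi(rA)=r\Phi(A)$ for all $r\geq 0$ and all $A\in\mathscr{A}_+$.

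\textbf{Part (2), additivity step.} Because $\Phi\in S_{{\rm mon} +}^{(2)}$ with $\Phi(0)=0$, inequality \eqref{supalii} already gives $\Phi(A+B)\geq\Phi(A)+\Phi(B)$ for $A,B\in\mathscr{A}_+$, so I only need the reverse inequality. Fix $A,B\in\mathscr{A}_+$ and $h>0$ and put $\phi(x):=\Phi(A+xB)$; applying \eqref{supchoi} with $A+khB,\ hB,\ hB$ in place of $A,B,C$ shows the increments $\phi((k+1)h)-\phi(kh)$ are nondecreasing in $k$, so by telescoping $\phi(h)-\phi(0)\leq\frac1n\bigl(\phi(nh)-\phi(0)\bigr)$ for every $n$. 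By the homogeneity just proved and the continuity of $\Phi$, $\frac1{nh}\phi(nh)=\Phi\bigl(\frac1{nh}A+B\bigr)\to\Phi(B)$, so the right-hand side converges to $h\Phi(B)$; since the positive cone of $\mathscr{B}$ is norm-closed, $\phi(h)-\phi(0)\leq h\Phi(B)$, and $h=1$ gives $\Phi(A+B)\leq\Phi(A)+\Phi(B)$. Hence $\Phi$ is additive on $\mathscr{A}_+$, and finally $\Phi(\beta A+B)=\Phi(\beta A)+\Phi(B)=\beta\Phi(A)+\Phi(B)$ for all $\beta\in\mathbb{R}_+$ and $A,B\in\mathscr{A}_+$.

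\textbf{Main obstacle.} Superadditivity and positive homogeneity together are still far from linearity, so the heart of the matter is the reverse (sub)additivity; extracting it forces a genuine use of the strong superadditivity \eqref{supchoi} (through the monotonicity of increments, i.e.\ ``midpoint convexity'' of $x\mapsto\Phi(A+xB)$) together with continuity and the asymptotics $\frac1x\Phi(A+xB)\to\Phi(B)$ — and it is exactly this asymptotic identity that requires homogeneity on $\mathscr{A}_+$, which is why $|\alpha|\neq 0,1$ (not merely $\Phi(\alpha I)=\alpha I$) is indispensable. The remaining difficulties are bookkeeping: the case distinction on $\alpha$ in Part (1), resolved via the $*$-map property and Corollary~\ref{Choi1}(b).
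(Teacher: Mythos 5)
Your argument is correct, and for Part (1) and the homogeneity half of Part (2) it follows essentially the paper's own route: Choi's inequality plus the starshapedness of Theorem~\ref{lineali} to force $\Phi(|\alpha|^2I)=\Phi(\alpha I)^*\Phi(\alpha I)$, then Theorem~\ref{3positive}, with the case $\alpha\in\mathbb{R}_+$, $\alpha>1$ reduced to $\alpha^{-1}\in(0,1)$ via Corollary~\ref{Choi1}(b); and a squeeze between the powers $|\alpha|^{\pm 2n}$ for positive homogeneity (the paper routes this through the set $\Omega=\{\beta:\Phi(\beta I)=\beta I\}$ and re-applies Part (1), whereas you sandwich $\Phi(rA)$ directly, which saves one invocation of Theorem~\ref{3positive} but only yields homogeneity on $\mathscr{A}_+$ — enough for Part (2) as stated). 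Where you genuinely diverge is the subadditivity step. The paper's argument is purely order-theoretic: for $A\leq B$ it applies strong superadditivity to the triple $(A,\,B-A,\,B-A)$ together with $\Phi(2C)=2\Phi(C)$ to get $\Phi(A)+\Phi(B)\geq\Phi(A+B)$, and then handles arbitrary $A,B\in\mathscr{A}_+$ by translating by $n\|B\|I$ and cancelling $\Phi(n\|B\|I)$. Your argument instead exploits the monotonicity of the increments of $x\mapsto\Phi(A+xB)$ coming from \eqref{supchoi}, telescopes, and passes to the limit using positive homogeneity, continuity of $\Phi$, and the closedness of the cone $\mathscr{B}_+$. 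Both are valid; the paper's version is more elementary in that it does not consume the continuity hypothesis at this stage (continuity is only needed inside Theorem~\ref{lineali}), while yours generalizes more readily to settings where one has strong superadditivity and positive homogeneity but no convenient order unit to translate by.
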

\begin{proof}
(1) First assume that $\alpha \in \mathbb{C}_1 $. Using Choi's inequality and employing the self-adjointness property of $\Phi$, we get
\begin{eqnarray*}
\Phi(\bar{\alpha} \alpha I ) \geq \Phi(\alpha I)^* \Phi (\alpha I) = \bar{\alpha} \alpha I,
\end{eqnarray*}
whence, by applying Theorem \ref{lineali}, we infer that
\begin{eqnarray*}
\Phi(\bar{\alpha} \alpha I) =\bar{\alpha} \alpha I.
\end{eqnarray*}
 Hence, $\Phi\big((\alpha I)^* \alpha I\big)=\Phi(\alpha I)^* \Phi(\alpha I)$.
 Now, Theorem \ref{3positive} yields
\begin{eqnarray*}
\Phi(\alpha A)= \Phi\big(A(\alpha I)\big)= \Phi(A)\Phi(\alpha I)= \alpha \Phi(A)
\end{eqnarray*}
for every $A\in\mathscr{A}$.\\
In the case when $\alpha \in \mathbb{R}^+ $, we give the proof only for the case $\alpha > 1$; the other case follows form the case when $\alpha \in \mathbb{C}_1$. We first show that $\Phi\left(\dfrac{1}{\alpha}I\right)=\dfrac{1}{\alpha}I$. It follows from Choi's inequality (Corollary \ref{Choi1} (b)) that
\begin{eqnarray*}
\Phi\left(\dfrac{1}{\alpha } I \right) \geq \left(\Phi(\alpha I) \right)^{-1}=\dfrac{1}{\alpha}I,
\end{eqnarray*}
since $\Phi(\alpha I)=\alpha I$. From $\dfrac{1}{\alpha}< 1$ and Theorem \ref{lineali} we infer that
\begin{eqnarray}\label{reveq}
\Phi\left(\dfrac{1}{\alpha}I\right)=\dfrac{1}{\alpha}I.
\end{eqnarray}
Therefore,
\begin{eqnarray*}
\dfrac{1}{\alpha^2}\Phi(I)&\geq& \Phi(\dfrac{1}{\alpha^2}I) \qquad \text{(by Theorem \ref{lineali})}\\ &\geq& \Phi(\dfrac{1}{\alpha}I)\Phi(\dfrac{1}{\alpha}I) \qquad \text{(by Choi's inequality)}\\ &=& \left(\dfrac{1}{\alpha}I\right)^2 \qquad \text{(by (\ref{reveq}).)}
\end{eqnarray*}
Hence,
\begin{eqnarray*}
\Phi\left(\dfrac{1}{\alpha^2} I \right)= \dfrac{1}{\alpha^2}I= \Phi\left(\dfrac{1}{\alpha}I\right)\Phi\left(\dfrac{1}{\alpha}I\right).
\end{eqnarray*}
Now, Theorem \ref{3positive} ensures that
\begin{eqnarray}\label{tas1}
\Phi\left(\dfrac{1}{\alpha} A\right)= \dfrac{1}{\alpha} \Phi(A)
\end{eqnarray}
for every $A\in\mathscr{A}$. Replacing $A$ by $\alpha A$ in equality (\ref{tas1}), we get $\Phi({\alpha} A)= {\alpha} \Phi(A)$.

(2) Set $\Omega := \{ \alpha \in \mathbb{C}_1 \cup \mathbb{R}_+\ |\ \Phi(\alpha I)=\alpha I\}$. From the previous part we have $\Omega= \{ \alpha \in \mathbb{C}_1 \cup \mathbb{R}_+\ |\ \Phi(\alpha A)=\alpha \Phi(A),\ \text{for every} \ A \in \mathscr{A}\}$. First note that\\
\textbf{Fact(1)}: If $0\neq\alpha\in \mathbb{R}_+$ and $ \alpha \in \Omega$, then $ \dfrac{1}{\alpha}$ is in $\Omega$, since $\Phi\Big(\alpha \dfrac{A}{\alpha}\Big)=\alpha \Phi\Big(\dfrac{A}{\alpha}\Big)$ and so $\Phi\Big(\dfrac{1}{\alpha}A\Big)=\dfrac{1}{\alpha}\Phi(A)$.\\
 \textbf{Fact(2)} : For every $\alpha \in \Omega$ we have
\begin{eqnarray*}
\Phi(|\alpha|^2A)&=&\Phi(\alpha \bar{\alpha}A)= \alpha \Phi(\bar{\alpha}A)=\alpha\Phi((\alpha A^*)^*)\\ &=& \alpha\Phi(\alpha A^*)^*=\alpha \bar{\alpha}\Phi(A^*)^*= \alpha \bar{\alpha}\Phi(A)= |\alpha|^2 \Phi(A).
\end{eqnarray*}
 By induction, we get $|\alpha|^{2n}, |\alpha|^{-2n}$ ($ n\in \mathbb{N}$) are in $\Omega$.\\
Next, let $\alpha \in \Omega$ such that $|\alpha|\neq 0,1,$ and that $\beta \in \mathbb{R}_+$. We aim to show that $\beta \in \Omega$. Using Fact(1), we may assume that $\beta \geq 1$. In the case when $1\leq\beta \leq |\alpha|$, we have
\begin{eqnarray*}
\Phi\left(\beta I\right)&=& \Phi\left( |\alpha|^2\dfrac{\beta}{|\alpha|^2}I\right)\\ &=&|\alpha|^2 \Phi\left( \dfrac{\beta}{|\alpha|^2}I\right) \qquad \text{ (since $|\alpha|^2 \in \Omega$)} \\ &\leq& |\alpha|^2 \dfrac{\beta}{ |\alpha|^2}\Phi(I) \qquad \text{ (by Theorem \ref{lineali})} \\ &=&\beta I.
\end{eqnarray*}
 Since $\beta\geq 1$, Theorem \ref{lineali} implies that $\Phi\left(\beta I\right)=\beta I$.
 In the case when $ 1 <|\alpha| \leq \beta$ (resp. $ 0<|\alpha| <1\leq \beta$), there is a number $n \in \mathbb{N}$ such that $1\leq\beta\leq |\alpha|^{2n}$ (resp. $1\leq\beta\leq \dfrac{1}{|\alpha|^{2n}}$). Now, the same reasoning as above and Fact(2) show that $\Phi(\beta I)=\beta I$. Thus, in all cases, we have $\beta \in \Omega$. \\
We proceed to show that $\Phi$ is additive on positive elements of $\mathscr{A}$. Let $A$ and $B$ be in $\mathscr{A}_+$ and $A\leq B$.
It follows from the previous part of the proof that $\Phi(2C)= 2\Phi(C)$ for every $C\in \mathscr{A}$. Since $\Phi$ is a strongly superadditive map on positive elements, we get
\begin{eqnarray*}
\Phi(A)+\Phi(A+B+(B-A))\geq \Phi(A+B)+\Phi(A+(B-A)),
\end{eqnarray*}
which implies that $\Phi(A)+\Phi(B)\geq \Phi(A+B)$. Since $\Phi$ is superadditive on $\mathscr{A}_+$ and $\Phi(0)=0$, we conclude that $\Phi(A+B)=\Phi(A) +\Phi(B)$.
For arbitrary positive elements $A,B$, without loss of generality, we can assume that $B\neq 0$. There exists $n\in \mathbb{N}$ such that $A \leq B+ n\|B\|I$. Hence,
\begin{align*}
\Phi(A+B) + \Phi(n\|B\|I) &\leq \Phi(A+B +n\|B\| I) \qquad \text{(since $\Phi$ is superadditive) } \\ &= \Phi(A)+\Phi(B+n\|B\| I) \qquad \text{(since $A \leq B+n\|B\| I$) }\\ &= \Phi(A)+\Phi(B)+\Phi(n\|B\| I) \qquad \text{ (Since $B \leq n \|B\| I$)},
\end{align*}
which shows that $\Phi(A+B)\leq \Phi(A)+\Phi(B)$. Consequently, $\Phi(A+B)=\Phi(A)+\Phi(B)$ for every $A,B\in \mathscr{A}_+$.
\end{proof}
Now, we are ready to establish our next result.
\begin{theorem}\label{mainnn}
Let $\mathscr{A}, \mathscr{B}$ be two unital $C^*$-algebras, and $\Phi:\mathscr{A}\to \mathscr{B}$ be a continuous unital positive map and $\Phi(0)
=0$. If either $\Phi$ is $6$-positive or in the class $S_{{\rm mon} +}^{(4)}$, then
 \begin{enumerate}
\item if $\Phi(\alpha I)= \alpha I$ for some $ \alpha \in \mathbb{C}_1 \cup \mathbb{R}_+$ with $|\alpha|\neq 0,1 $, then $\Phi(\beta A +B)=\beta \Phi(A) +\Phi(B)$ for every $\beta \in \mathbb{R}$ and $A,B \in \mathscr{A}$,
\item if $\Phi\big((r+\mbox{i}s) I\big)= (r+\mbox{i}s) I$ for some $r,s \in \mathbb{R}$ with $0\neq |r+\mbox{i}s|<1$ and $s\neq 0$, then $\Phi$ is linear on $\mathscr{A}$.
\end{enumerate}
\end{theorem}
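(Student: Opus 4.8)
The plan is to extract from Lemma \ref{mainn} the additivity of $\Phi$ on $\mathscr{A}_+$ together with its positive homogeneity, to promote additivity to all of $\mathscr{A}$ by a $2\times2$ amplification argument, and, for (2), to split off the conjugate‑linear part of $\Phi$.

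First I would observe that under either hypothesis $\Phi$ is a continuous unital $3$-positive map lying in $S^{(2)}_{\mathrm{mon}+}$ with $\Phi(0)=0$: a $6$-positive map is both $3$-positive and $4$-positive (hence in $S^{(2)}_{\mathrm{mon}+}$), while if $\Phi\in S^{(4)}_{\mathrm{mon}+}$ then $\Phi_4(P)\ge\Phi_4(0)=0$ for $P\ge0$ shows $\Phi$ is $4$-positive, so again $3$-positive and in $S^{(2)}_{\mathrm{mon}+}$. Consequently Lemma \ref{mainn}(2) applies and gives that $\Phi$ is additive on $\mathscr{A}_+$, while the proof of Lemma \ref{mainn} shows $\Phi(\gamma A)=\gamma\Phi(A)$ for every $\gamma\in\mathbb{R}_+$ and every $A\in\mathscr{A}$ (indeed $\mathbb{R}_+$ is contained in the set $\Omega$ appearing there). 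In the situation of (2) we also have $\Phi(\alpha A)=\alpha\Phi(A)$ for all $A$, by Lemma \ref{mainn}(1).

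The key extra ingredient is that $\Phi_2\colon M_2(\mathscr{A})\to M_2(\mathscr{B})$ is superadditive on $M_2(\mathscr{A})_+$. If $\Phi$ is $6$-positive, then $\Phi_2$ is a $3$-positive map (its $3$rd amplification being $\Phi_6$) with $\Phi_2(0)=0$, so superadditivity follows from Theorem \ref{3positive2}. If $\Phi\in S^{(4)}_{\mathrm{mon}+}$, then $\Phi_2\in S^{(2)}_{\mathrm{mon}+}$ (this amounts to monotonicity of $\Phi_4$ on $M_4(\mathscr{A})_+$), hence $\Phi_2$ is strongly superadditive on positive elements by the discussion preceding Theorem \ref{lineali}, and, since $\Phi_2(0)=0$, also superadditive. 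Now fix $A,B\in\mathscr{A}$ and set $c=\|A\|+\|B\|$. Since $\|X\|^2I\ge X^*X$, relation (\ref{msm1}) shows that each of $\begin{pmatrix}\|A\|I & A\\ A^* & \|A\|I\end{pmatrix}$, $\begin{pmatrix}\|B\|I & B\\ B^* & \|B\|I\end{pmatrix}$ and their sum $\begin{pmatrix}cI & A+B\\ (A+B)^* & cI\end{pmatrix}$ is positive in $M_2(\mathscr{A})$. Applying $\Phi_2$ and superadditivity yields a positive element of $M_2(\mathscr{B})$ whose two diagonal entries both equal $\Phi(cI)-\Phi(\|A\|I)-\Phi(\|B\|I)$ — which is $0$ because $\Phi$ is additive on $\mathscr{A}_+$ and $cI=\|A\|I+\|B\|I$ — and whose $(1,2)$-entry is $\Phi(A+B)-\Phi(A)-\Phi(B)$; by Lemma \ref{kelidi} the latter vanishes. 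Thus $\Phi$ is additive on $\mathscr{A}$, whence $\Phi(-A)=-\Phi(A)$, and together with positive homogeneity this gives $\Phi(\gamma A)=\gamma\Phi(A)$ for all $\gamma\in\mathbb{R}$; this is (1).

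For (2), write $\Phi=\Lambda+\Theta$ with $\Lambda(A)=\tfrac12\big(\Phi(A)-i\Phi(iA)\big)$ and $\Theta(A)=\tfrac12\big(\Phi(A)+i\Phi(iA)\big)$. Because $\Phi$ is real linear by (1), $\Lambda$ is complex linear and $\Theta$ is conjugate linear, so $\Phi(\alpha A)=\alpha\Lambda(A)+\bar\alpha\,\Theta(A)$; comparing this with $\Phi(\alpha A)=\alpha\Phi(A)=\alpha\Lambda(A)+\alpha\Theta(A)$ gives $(\bar\alpha-\alpha)\Theta(A)=0$ for every $A$, and since $\bar\alpha-\alpha=-2is\ne0$ we obtain $\Theta\equiv0$, i.e.\ $\Phi=\Lambda$ is linear. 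The step I expect to need the most care is the bookkeeping of which amplification of $\Phi$ inherits which positivity/monotonicity property, so that $\Phi_2$ is superadditive under each of the two hypotheses; once that is in place, everything else is short.
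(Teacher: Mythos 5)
Your proposal is correct and follows essentially the same route as the paper: reduce to Lemma \ref{mainn} by noting that either hypothesis makes $\Phi$ a unital $3$-positive map in $S^{(2)}_{\mathrm{mon}+}$ with $\Phi(0)=0$, show $\Phi_2$ is superadditive on $M_2(\mathscr{A})_+$, and apply it to $\begin{pmatrix}\|A\|I & A\\ A^* & \|A\|I\end{pmatrix}$ and its companion to upgrade additivity from $\mathscr{A}_+$ to all of $\mathscr{A}$, then get real homogeneity from $\Phi(-A)=-\Phi(A)$. The only (harmless) divergence is in the finale of part (2): the paper derives $\Phi(\mathrm{i}I)=\mathrm{i}I$ from real-linearity and then relies on Lemma \ref{mainn}(1) with $\alpha=\mathrm{i}$, whereas you annihilate the conjugate-linear part $\Theta$ directly using Lemma \ref{mainn}(1) with $\alpha=r+\mathrm{i}s$; both arguments are valid.
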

\begin{proof}

(1) Let $\Phi$ be $6$-positive. We begin by proving that $\Phi$ is additive on $\mathscr{A}$. The map $\Phi_2:M_2(\mathscr{A})\to M_2(\mathscr{B})$ is $3$-positive and $\Phi_2(0)=0$. Thus, using Theorem \ref{3positive2}, $\Phi_2$ is superadditive on positive elements of $M_2(\mathscr{A})$. Let $\tilde{A}=\begin{bmatrix}
 \|A\|I & A^* \\ A & \|A\|I
\end{bmatrix}$ and $\tilde{B}=\begin{bmatrix}
 \|B\| I & B^* \\ B & \|B\| I
\end{bmatrix}$. Trivially, $\tilde{A}, \tilde{B}\in M_2(\mathscr{A})_+$. It follows from the superadditivity property of $\Phi_2$ on positive elements that
\begin{align*}
&\hspace{-1cm}\begin{bmatrix}
 \Phi(\|A\|I+\|B\| I) & \Phi(A^*+B^*) \\ \Phi(A+B) & \Phi(\|A\|I+\|B\| I)
\end{bmatrix}
\\ &=\Phi_2(\tilde{A}+\tilde{B})
\\ &\geq \Phi_2(\tilde{A})+\Phi_2(\tilde{B})
\\ &= \begin{bmatrix}
 \Phi(\|A\|I) & \Phi(A)^* \\ \Phi(A) & \Phi(\|A\|I)
\end{bmatrix} + \begin{bmatrix}
 \Phi(\|B\| I) & \Phi(B)^* \\ \Phi(B) & \Phi(\|B\| I)
\end{bmatrix}.
\end{align*}
Therefore, by using the second part of Lemma \ref{mainn}, we get
\begingroup\makeatletter\def\f@size{11}\check@mathfonts
\begin{align*} &
\begin{bmatrix}
 \Phi(\|A\|I+\|B\| I) - \Phi(\|A\|I)-\Phi(\|B\| I)& \Phi(A^*+B^*) - \Phi(A^*)-\Phi(B^*) \\ \Phi(A+B) - \Phi(A)-\Phi(B) & \Phi(\|A\|I+\|B\| I)- \Phi(\|A\|I)-\Phi(\|B\| I)
\end{bmatrix}\\ & = \begin{bmatrix}
 0& \Phi(A^*+B^*) - \Phi(A^*)-\Phi(B^*) \\ \Phi(A+B) - \Phi(A)-\Phi(B) & 0
\end{bmatrix} \geq 0,
\end{align*}
\endgroup
whence $\Phi(A+B) =\Phi(A)+\Phi(B)$ for every $A,B \in \mathscr{A}$.\\
The task is now to show $\Phi(\beta A)=\beta \Phi(A)$ for every $\beta \in \mathbb{R}$ and $A\in \mathscr{A}$. Part (2) of Lemma \ref{mainn} ensures $\Phi(\beta I)=\beta I$, for all $\beta \in \mathbb{R}_+$ and so part (1) then shows $\Phi(\beta A)=\beta \Phi(A)$ for all $A\in \mathscr{A}$ and all $\beta \in \mathbb{R}_+$. Let $\beta \in \mathbb{R}_-$. The additivity of $\Phi$ implies that
\begin{eqnarray}\label{msm5}
\Phi(-A)=-\Phi(A)
\end{eqnarray}
for every $A\in \mathscr{A}$. Hence,
\begin{eqnarray*}
\Phi(\beta A)&=&\Phi\big(-(-\beta A)\big)\\ &=& -\Phi(-\beta A) \qquad \text{(by (\ref{msm5})})\\ &=&\beta \Phi(A) \qquad \text{(since $-\beta \in \mathbb{R}_+$.)}
\end{eqnarray*} This implies that $\Phi(\beta A+ B)= \beta \Phi(A) +\Phi(B)$ for every $\beta \in \mathbb{R}$ and $A, B \in \mathscr{A}$. \\

(2) To prove the linearity of $\Phi$, we only need to prove that $\Phi(\mbox{i} I)=\mbox{i} I$.\\ Using the equality $\Phi\big((r+\mbox{i}s) I\big)= (r+\mbox{i}s) I$, we have
\begin{eqnarray*}
rI+\mbox{i}sI&=& \Phi(rI+\mbox{i}sI) \\ &=& \Phi(rI)+ \Phi(s\mbox{i}I)\qquad \text{(since $\Phi$ is additive)}\\ &=& r\Phi(I)+s\Phi(\mbox{i} I) \qquad \text{ (since $\Phi$ is homogeneous on $\mathbb{R}$)}\\ &=& rI+s\Phi(\mbox{i} I),
\end{eqnarray*}
which implies that $\Phi(\mbox{i}I)= \mbox{i} I$.\\
A similar argument can be used to show the results in the case when $\Phi$ is in the class $S_{{\rm mon} +}^{(4)}$. Indeed, if $\Phi$ is in the class $S_{{\rm mon} +}^{(4)}$, then $\Phi_2$ is monotone on positive elements of $M_2(\mathscr{A})$ as well as $\Phi$ is $4$-positive, since $\Phi_4$ is monotone on positive matrices in $M_4(\mathscr{A})$. Hence, $\Phi_2$ is in the class $S_{{\rm mon} +}^{(2)}$. Therefore, inequality \eqref{supalii} yields that $\Phi_2$ is superadditive on the positive elements of $M_2(\mathscr{A})$. The result follows by a similar method as in the case when $\Phi$ is $6$-positive.
\end{proof}

 Lemma \ref{mainn} and the above theorem are valid even if we replace the conditions $\Phi(I)=I$ and $\Phi({\alpha I})=\alpha I$ by $\Phi(I)$ is invertible and $\Phi(\alpha I)=\alpha\Phi(I)$, respectively. Indeed, let $\Phi: \mathscr{A}\to \mathscr{B} $ be a map between $C^*$-algebras which is in the class $S_{{\rm mon} +}^{(n)}$. Then the positivity of the Schur product of positive elements yield that the map $\Psi :\mathscr{A}\to \mathscr{B}$ given by $\Psi(X)=\Phi(I)^{-\frac{1}{2}} \Phi(X) \Phi(I)^{-\frac{1}{2}}$ is a map in the class $S_{{\rm mon} +}^{(n)}$, which is unital. However, the condition $\Phi(0)=0$ can not be admitted, in general (see Example \ref{exfin} (1)). \\

Finally, we give some examples to show the necessity of some hypotheses in Lemma \ref{mainn} and Theorem \ref{mainnn}.

\begin{example}\label{exfin}
(1) Consider the map $\varphi : \mathbb{C}\to \mathbb{C}$ defined by $\varphi(z)=\dfrac{1}{3}(|z|^{\frac{3}{2}} +|z|^{\frac{4}{3}} +1)$. It is known that $\varphi$ is a $3$-positive map (see \cite[Theorem 6.3.9]{horn}) and $\varphi \in S_{{\rm mon} +}^{(2)}$ (see \cite[Theorem 5.1]{hiai}). Evidently, there exists a number $z \in [1.1, 2]$ such that $\varphi(z) = z$. However, $\varphi$ is not additive on positive numbers. This shows that the condition $\Phi(0)=0$ cannot be removed in Lemma \ref{mainn}.\\
(2) Let $(\mathscr{A},\| \cdot\|)$ be a unital $C^*$-algebra. Theorem \ref{3positive2} ensures that $\| \cdot \|$ is not a $3$-positive map in the most $C^*$-algebras. For every $\alpha >0$, we have $\|\alpha I\|=\alpha$ while $\| \cdot \|$ is not additive on positive elements of $\mathscr{A}$, in general. It shows that $3$-positivity is a necessary condition in Lemma \ref{mainn}. Thus we arrived at another interesting question whether there is a map in the class $S_{{\rm mon} +}^{(2)}$ which is not $3$-positive.\\
(3) The map $|\cdot |$ is a $3$-positive norm on $\mathbb{C}$; see \cite[Section 7.5 Problem 4]{horn2}. In addition $|\cdot|$ is in the class $S_{{\rm mon} +}^{(2)}$.
Indeed, if $\begin{bmatrix}
a &b \\ b^* &c
\end{bmatrix}\geq \begin{bmatrix}
e &f \\ f^* &g
\end{bmatrix}\geq 0
$, then we have
\begin{eqnarray*}
(a-e)(c-g)\geq (b-f)(b^*-f^*)= |b|^2- 2{\rm Re}(f^*b) +|f|^2.
\end{eqnarray*}
Since ${\rm Re}(f^*b)\leq |fb|$, we can write
\begin{eqnarray*}
(|a|-|e|)(|c|-|g|)&=& (a-e)(c-g)\\ &\geq& (b-f)(b^*-f^*)\\ &=& |b|^2- 2{\rm Re}(f^*b) +|f|^2\\ &\geq& |b|^2- 2|f||b| +|f|^2 \\ &=&(|b|-|f|)^2.
\end{eqnarray*}
This immediately implies that
\begin{eqnarray*}
\begin{bmatrix}
|a| &|b| \\ |b^*| &|c|
\end{bmatrix}\geq \begin{bmatrix}
|e| &|f| \\ |f^*| &|g|\end{bmatrix}.
\end{eqnarray*}
However, $|\cdot|$ is not $6$-positive, nor is in the class $S_{{\rm mon} +}^{(4)}$. For every $\alpha >0$, we have $|\alpha I|=\alpha$, but $|\cdot|$ is not additive on $\mathbb{C}$. This shows that Theorem \ref{mainnn} is not valid for $3$-positive maps in the class $S_{{\rm mon} +}^{(2)}$, in general.
\end{example}

\noindent\textbf{Acknowledgement}\\
The authors would like to sincerely thank the anonymous referee for carefully reading the paper and for very useful comments.  

\end{document}